\newcommand{\dxyn}{\frac{dx\,dy}{\abs{x-y}^n}}
\DeclareMathOperator{\tail}{Tail}
\begin{document} 
	
\title[The De Giorgi method for local and nonlocal systems]{The De Giorgi method for \\local and nonlocal systems}%

\author{Linus Behn}
\address{%
	Fakult\"{a}t  f\"{u}r Mathematik,
	University of Bielefeld,
	Universit\"{a}tsstrasse~25, 33615 Bielefeld, Germany}
\email{linus.behn@math.uni-bielefeld.de}

\author{Lars Diening}
\email{lars.diening@uni-bielefeld.de}

\author{Simon Nowak}
\email{simon.nowak@uni-bielefeld.de}
\author{Toni Scharle}
\email{tscharle@rosen-group.com}


\begin{abstract}
	We extend the De Giorgi iteration technique to the vectorial setting. For this we replace the usual scalar truncation operator by a vectorial shortening operator. As an application, we prove local boundedness for local and nonlocal nonlinear systems. 
  Furthermore, we show convex hull properties, which are a generalization of the maximum principle to the case of systems.
\end{abstract}

\keywords{De Giorgi, boundedness, regularity, vectorial, nonlinear system, nonlocal, Orlicz, shortening operator, p-Laplace, convex hull property}

\subjclass[2020]{%
  35D30, 
  35B45, 
  35B50, 
  35J60, 
  35R09, 
}

\thanks{This work was partially funded by the Deutsche Forschungsgemeinschaft (DFG, German Research Foundation) - SFB 1283/2 2021 - 317210226 (project A7) and IRTG 2235 (Project 282638148)}

\maketitle


\section{Introduction}
\label{sec:introduction}

In his 1957 paper \cite{DeGiorgi57}, De Giorgi developed a new method to prove Hölder regularity of weak solutions to uniformly elliptic equations, today known as the \emph{De~Giorgi iteration technique}. In this paper, we present a way to extend this method to the vectorial setting, i.e., to systems of equations. This way, we are able to show local boundedness for local and nonlocal systems.

The key difficulty lies in the fact that in De Giorgi's technique estimates for the level functions $u_\lambda$ play a central role, where
\begin{align*}
	u_\lambda (x)\coloneqq \max\set{u(x)-\lambda ,0}.
\end{align*} 
Note that the expression $\max\set{u(x)-\lambda ,0}$ does not make sense if $u(x)\in \RRN$ is a vector. Thus, in order to extend the technique to the vectorial setting, we replace it by by $S_\lambda u$, where $S_\lambda$ is the \emph{shortening operator}.  This operator which we introduce below already appeared in~\cite{Scharle20,DieningScharleSueli21}. Fine properties of~$S_\lambda$ including some novel estimates are discussed in detail in Section~\ref{sec:truncation-operators}.

The shortening operator $S_\lambda\,:\, \RRN \to \RRN$ at the level of~$\lambda > 0$ is defined via
\begin{align*}
	S_\lambda a
	&\coloneqq \max \lbrace \abs{a}-\lambda, 0 \rbrace \frac{a}{\abs{a}}
	=
	\begin{cases}
		0 &\quad \text{for $\abs{a} \leq \lambda$},
		\\
		(\abs{a}-\lambda) \frac{a}{\abs{a}} &\quad \text{for $\abs{a} > \lambda$}.
	\end{cases}
\end{align*}
For a convex closed set $K\subset \RRN$, let $\Pi_K$ be the closest point projection onto~$K$. In the special case that $K=\overline{B_\lambda(0)}$ for some $\lambda>0$, we obtain the \emph{truncation operator} $T_\lambda$ at the level of $\lambda$ with $T_\lambda(a) = \min \set{\abs{a},\lambda} \frac{a}{\abs{a}}$.
The operators $\Pi_K, T_\lambda$ and $S_\lambda$ are depicted in Figure \ref{fig:PiTandS}. 
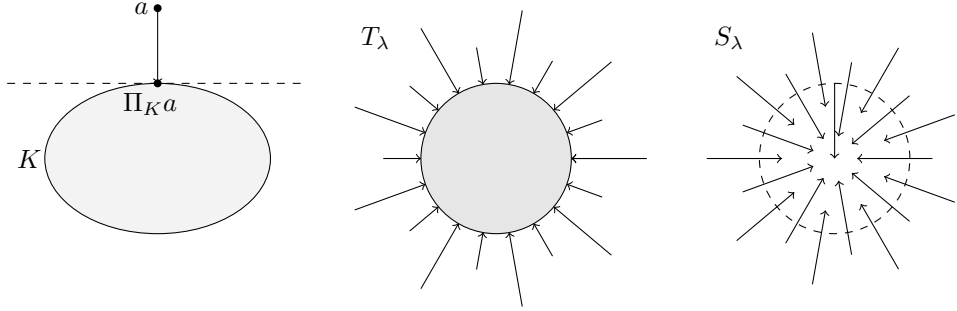
\begin{figure}[!ht]
	\centering
	
	\begin{tikzpicture}
		\begin{scope}[shift={(-4.5,0)}]
			\draw (0,0) ellipse (1.5 and 1);
			\fill[opacity=0.05] (0,0) ellipse (1.5 and 1);
			\coordinate (a) at (0,2);
			\coordinate (b) at (0.8,.5);
			\coordinate (Pia) at (0,1);
			\fill (a) circle (1.5pt);
			\fill (Pia) circle (1.5pt);
			\node[left] at (0,2) {$a$};
			\node[anchor=north] at (-0.1,1) {$\Pi_K a$};
			\node[anchor=east] at (-1.42,0) {$K$};
			\draw[->] (a) -- (Pia);
			\draw[dashed] (-2,1) -- (2,1);
		\end{scope}
		
		\draw (0,0) circle (1);
		\fill[opacity=0.1] (0,0) circle (1);
		
		\draw[->] (2,0) -- (1,0);
		\foreach \x in {0,40,80,120,160,200,240,280,320} {
			\begin{scope}[rotate=\x]
				\draw[->] (2.0,0) -- (1,0);
		\end{scope}}
		\foreach \x in {20,60,100,140,180,220,260,300,340} {
			\begin{scope}[rotate=\x]
				\draw[->] (1.5,0) -- (1,0);
		\end{scope}}
		
		\node at (-1.6,1.6) {$T_\lambda$};
		
		\begin{scope}[shift={(4.5,0)}]
			\draw[dashed] (0,0) circle (1);
			
			\draw[->] (0,1) -- (0,0);
			\foreach \x in {0,40,80,120,160,200,240,280,320} {
				\begin{scope}[rotate=\x]
					\draw[->] (1.3,0) -- (0.3,0);
			\end{scope}}
			\foreach \x in {20,60,100,140,180,220,260,300,340} {
				\begin{scope}[rotate=\x]
					\draw[->] (1.7,0) -- (0.7,0);
			\end{scope}}
			\node at (-1.4,1.6) {$S_\lambda$};
		\end{scope}
		
	\end{tikzpicture}
	\caption{Projection~$\Pi_K$, truncation operator $T_\lambda$ and shortening operator $S_\lambda$}
	\label{fig:PiTandS}
\end{figure}

\subsection{Setup and main results}

Before stating our main results let us briefly introduce the local and nonlocal vectorial problems we are studying. 

\subsubsection{Local vectorial problems}
\label{ssec:local-vect-probl}

We begin with the local case. Let $\Omega \subset \RRn$ be open and bounded and let $\phi$ be an N-function. The necessary background on N-functions and Simonenko indices is collected in Section \ref{sec:orlicz}. An important example is given by $\phi(t)=\frac 1p t^p$ for $1< p <\infty$. We aim to show boundedness of minimizers of the functional
\begin{align}\label{eq:local_energy}
	\mathcal{J}_\phi (v) \coloneqq \int_\Omega \phi (\abs{\nabla v}) \,dx ,
\end{align}
where we minimize over all $v\in W^{1,\phi}_{u_D}(\Omega,\RRN)\coloneqq W_0^{1,p}(\Omega,\RRN)+u_D$, where $u_D\in W^{1,\phi}(\Omega,\RRN) $ are some prescribed boundary data. If $\phi(t)=\frac 1p t^p$, then $\mathcal{J}_\phi$ becomes the usual $W^{1,p}(\Omega,\RRN)$-energy.

The Euler-Lagrange system corresponding to \eqref{eq:local_energy} is given by
\begin{align}\label{eq:local_eq}
	\int_\Omega \phi'(\abs{\nabla u}) \frac{\nabla u}{\abs{\nabla u}} :\nabla \psi \,dx=0\qquad\text{for all $\psi \in W^{1,\phi}_0(\Omega,\RRN)$}.
\end{align}
In the case $\phi(t)=\frac 1p t^p$, this becomes the standard $p$-Laplace system. Our main theorem in the local case asserts that solutions to \eqref{eq:local_eq} are locally bounded. Although this property is well known, our method leads to a simpler proof of local boundedness. For a more detailed discussion we refer to Section~\ref{sec:previous_results} below

\begin{theorem}[$L^\infty$-estimates for local systems]\label{thm:orlicz_L_infty}
	Let $\Omega \subset \RRn$ be open and bounded. Let $B$ be a ball with radius $r$ such that $2B\subset \Omega$. Let $\phi$ be an N-function with Simonenko indices $p$ and $q$, where $1\leq p\leq q<\infty$. Let $u\in W^{1,\phi}(\Omega, \RRN)$ be a weak solution of \eqref{eq:local_eq}. Then $u\in L^\infty(B,\RRN)$ and we have the estimate	
	\begin{equation}
		\sup_B \phi(r^{-1}\abs{u}) \leq c \fint_{2B} \phi(r^{-1}\abs{u}) \,dx ,
	\end{equation}
	where $c$ depends only on $q$ and $n$.
\end{theorem}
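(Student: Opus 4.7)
The plan is to carry out a De Giorgi iteration in which the scalar truncation $(u-\lambda)_+$ is replaced by the vectorial shortening $S_\lambda u$. I set up concentric balls $B_k$ of radii $r_k = r(1+2^{-k})$ sandwiched between $B$ and $2B$, increasing levels $\lambda_k = \lambda(1-2^{-k}) \nearrow \lambda$ for a threshold $\lambda > 0$ to be fixed at the end, and cutoff functions $\eta_k \in C_c^\infty(B_k)$ with $\eta_k \equiv 1$ on $B_{k+1}$ and $\abs{\nabla \eta_k} \leq C\, 2^k/r$. To these I associate the excess quantity
\[
A_k \coloneqq \fint_{B_k} \phi\bigl(r^{-1}\abs{S_{\lambda_k} u}\bigr) \, dx .
\]
The goal is the standard fast-decay recursion $A_{k+1} \leq C b^k A_k^{1+\delta}$ for some $b>1$ and $\delta>0$, so that if $A_0$ is sufficiently small then $A_k \to 0$; this forces $\abs{u}\leq \lambda$ almost everywhere on $B$, and choosing $\lambda$ with $\phi(r^{-1}\lambda)$ a fixed large multiple of $\fint_{2B}\phi(r^{-1}\abs{u})\,dx$ then yields the claimed pointwise estimate.

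The core of the argument is a Caccioppoli inequality for the shortening. Testing \eqref{eq:local_eq} against $\psi = \eta_k^q\, S_{\lambda_k} u \in W^{1,\phi}_0(\Omega,\RRN)$ and using the product rule gives
\[
\int \eta_k^q\, \phi'(\abs{\nabla u})\tfrac{\nabla u}{\abs{\nabla u}} : \nabla S_{\lambda_k} u \, dx = -q\int \eta_k^{q-1}\, \phi'(\abs{\nabla u})\tfrac{\nabla u}{\abs{\nabla u}} : (S_{\lambda_k} u \otimes \nabla \eta_k)\, dx .
\]
The pointwise properties of $S_\lambda$ established in Section~\ref{sec:truncation-operators}---most importantly a monotonicity bound comparing $\phi'(\abs{\nabla u})\nabla u/\abs{\nabla u} : \nabla S_\lambda u$ with $\phi(\abs{\nabla S_\lambda u})$, together with the vanishing of $\nabla S_\lambda u$ on $\set{\abs{u}\leq \lambda}$---combined with absorption via Young's inequality for N-functions, then yield
\[
\fint_{B_{k+1}} \phi(\abs{\nabla S_{\lambda_k} u})\,dx \leq C\, 2^{kq}\, A_k .
\]

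To close the iteration I combine this with a Sobolev--Poincar\'e inequality for $W^{1,\phi}$ applied to $\eta_{k+1} S_{\lambda_k} u$, which produces integrability in a strictly larger N-function $\phi_\star$ (the Orlicz--Sobolev conjugate of $\phi$), and I exploit the geometric level gap
\[
\abs{S_{\lambda_k} u} \geq (\lambda_{k+1}-\lambda_k)\,\mathbf{1}_{\set{\abs{S_{\lambda_{k+1}} u}>0}} = 2^{-(k+1)}\lambda\,\mathbf{1}_{\set{\abs{S_{\lambda_{k+1}} u}>0}} .
\]
A Chebyshev-type argument on the set $\set{\abs{S_{\lambda_{k+1}} u}>0}$ converts the improved integrability of $S_{\lambda_k} u$ into a power gain over $A_k$, producing the desired recursion with an exponent $1+\delta$ depending on $p$, $q$, and $n$.

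The step I expect to be the main obstacle is the vectorial Caccioppoli inequality itself. In the scalar case the test function $(u-\lambda)_+$ enjoys the clean identity $\nabla(u-\lambda)_+ \cdot \nabla u = \abs{\nabla(u-\lambda)_+}^2$, but for $S_\lambda u$ the gradient picks up an additional rank-one contribution proportional to $\lambda\, u\otimes \nabla(u/\abs{u})$. Showing that this extra term is compatible---indeed favorable---when contracted against $\phi'(\abs{\nabla u})\nabla u/\abs{\nabla u}$ is precisely the content of the fine estimates for $S_\lambda$ promised in Section~\ref{sec:truncation-operators}, and leveraging them cleanly in the N-function setting with Simonenko indices $1\leq p \leq q < \infty$ is the technical crux of the argument.
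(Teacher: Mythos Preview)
Your iteration scheme, the use of the improved Sobolev--Poincar\'e inequality, the Chebyshev step, and the final choice of the threshold all match the paper and are fine. The gap is exactly where you anticipated it: the Caccioppoli step.

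You test with $\eta_k^q S_{\lambda_k}u$ at a single level and then assert a pointwise bound of the type
\[
\phi'(\abs{\nabla u})\,\frac{\nabla u}{\abs{\nabla u}}:\nabla S_\lambda u \;\gtrsim\; \phi(\abs{\nabla S_\lambda u}),
\]
attributing it to Section~\ref{sec:truncation-operators}. No such bound is proved there, and in fact it fails. Writing $a=\abs{\nabla u}^2$, $b=\abs{\nabla\abs{u}}^2$ and $s=\abs{S_\lambda u}/\abs{u}$, formulas~\eqref{eq:Slambdau-gradient-gradient-aux}--\eqref{eq:Slambdau-gradient2} give $\nabla u:\nabla S_\lambda u=sa+(1-s)b$ and $\abs{\nabla S_\lambda u}^2=s^2a+(1-s^2)b$. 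For $\phi(t)=t^p$ with $p<2$, and with $b/a=s\to 0$ (which occurs when $\abs{u}$ is barely above $\lambda$ and $\nabla u$ is mostly tangential to the sphere $\{\abs{u}=\mathrm{const}\}$, e.g.\ $u(x)=(\lambda+\epsilon+\delta x_1)(\cos Mx_2,\sin Mx_2)$ with $M$ large), the ratio of the left side to $\phi(\abs{\nabla S_\lambda u})$ tends to $0$. So the single-level Caccioppoli you propose cannot close, at least not in the full range $1\le p\le q<\infty$ covered by the theorem, and the subsequent absorption argument breaks because the term $\epsilon\!\int\eta^q\phi(\abs{\nabla u})\indicator_{\{\abs{u}>\lambda\}}$ produced by Young's inequality is genuinely larger than what the left-hand side provides.

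What the paper actually uses is the weaker but universally valid estimate~\eqref{eq:Slambdau-low},
\[
\phi'(\abs{\nabla u})\,\frac{\nabla u}{\abs{\nabla u}}:\nabla S_\gamma u\;\ge\;\phi(\abs{\nabla u})\,\frac{\abs{S_\gamma u}}{\abs{u}},
\]
and then a \emph{three-level} scheme $\lambda<\gamma<\Lambda$ (Lemma~\ref{lem:Cacc_local}): one tests at the intermediate level $\gamma=\tfrac12(\lambda+\Lambda)$, carries the factor $\abs{S_\gamma u}/\abs{u}$ through Young's inequality on both sides so that absorption works, and only afterwards converts the two sides to different levels via~\eqref{eq:orlicz_aux1}--\eqref{eq:orlicz_aux2}. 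On $\{\abs{u}>\Lambda\}$ the factor is bounded below by $\tfrac{\Lambda-\lambda}{2\Lambda}$, yielding $\int_{B_r}\phi(\abs{\nabla S_\Lambda u})$ on the left; on $\{\abs{u}>\gamma\}$ one has $\abs{u}\le\tfrac{2\Lambda}{\Lambda-\lambda}\abs{S_\lambda u}$, yielding $\int_{B_R}\phi\big(\tfrac{\Lambda}{\Lambda-\lambda}\tfrac{\abs{S_\lambda u}}{R-r}\big)$ on the right. In the iteration this is applied with $\lambda=\lambda_{k-1}$, $\Lambda=\lambda_k$. Once you insert this three-level trick, the rest of your argument goes through exactly as in the paper.
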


\subsubsection{Nonlocal vectorial problems}
\label{ssec:nonl-vect-probl}

The main focus of this paper lies in the study of vectorial nonlocal problems analogous to \eqref{eq:local_energy}. For this let $\phi$ again be an N-function and $\Omega\subset \RRn$ open and bounded. We define for $s \in (0,1)$
\begin{align}\label{eq:nonlocal_energy}
	\mathcal{J}_\phi^s (v) \coloneqq \int_{\RRn}\int_{\RRn} \phi\left(\frac{\abs{v(x)-v(y)}}{\abs{x-y}^s}\right) \dxyn .
\end{align}
In the case $\phi(t)=\frac 1p t^p$, this becomes the $W^{s,p}(\mathbb{R}^n,\mathbb{R}^N)$ energy.
This time we assume that $u$ minimizes $\mathcal{J}_\phi^s$ among all $v\in W^{s,\phi}(\RRn,\RRN)$ with $v=g$ on $\Omega^c$, where $g\in W^{s,\phi}(\RRn,\RRN)$ is some given complement data.

Such minimizers satisfy the Euler-Lagrange equation
\begin{align}\label{eq:nonlocal_equation}
	\int_{\RRn}\int_{\RRn} \phi' \left(\frac{\abs{u(x)-u(y)}}{\abs{x-y}^s}\right) \frac{u(x)-u(y)}{\abs{u(x)-u(y)}} \frac{\psi(x)-\psi(y)}{\abs{x-y}^s} \dxyn = 0,
\end{align}
for all $\psi \in W^{s,\phi}_c (\Omega,\RRN)$. In the model case $\phi(t)=\frac 1p t^p$, \eqref{eq:nonlocal_equation} becomes the fractional $p$-Laplace system.  In order to control the decay of $u$ at infinity we introduce for any ball $B\subset \RRn$ with radius $r>0$ and center $x_B$ the \emph{nonlocal tail} of $u$ as
\begin{align*}
	\tail(u,B)\coloneqq r^s\left(\phi'\right)^{-1}\Bigg(r^s \int_{B^c}\phi'\left(\frac{\abs{u(y)}}{\abs{y-x_B}^s}\right) \frac{dy}{\abs{y-x_B}^{n+s}}\Bigg).
\end{align*}
This agrees with the definition in~\cite[(3.4)]{ChakerKimWeidner2022}.

Our main result is again concerned with local boundedness of solutions to \eqref{eq:nonlocal_equation}. In contrast to Theorem \ref{thm:orlicz_L_infty} in the local setting, this result is new.

\begin{theorem}[$L^\infty$-estimates for nonlocal systems]\label{thm:local_boundedness_nonlocal}
	Let $\Omega\subset \RRn$ be an open and bounded set. Let $B$ be a ball such that $2B\subset \Omega$. Let $\phi$ be an N-function with Simonenko indices $p$ and $q$, where $1\leq p\leq q <\infty$. If $u\in W^{s,\phi}(\RRn,\RRN)$ is a weak solution to \eqref{eq:nonlocal_equation}, then $u\in L^\infty (B,\RRN)$ and we have the estimate
	\begin{align}
		\sup_B \phi (r^{-s}\abs{u}) \leq c \fint_{2B}\phi\left(r^{-s}\abs{u}\right)\,dx + c\,\phi \big(r^{-s}\tail (u,B)\big),
	\end{align}
	where $c>0$ depends only on $q$, $s$ and $n$.
\end{theorem}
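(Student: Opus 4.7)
The plan is to run a De~Giorgi iteration in the spirit of~\cite{DeGiorgi57}, but with the scalar truncation $(u-\lambda)_+$ replaced by the vectorial shortening $S_\lambda u$ introduced above. By translation and dilation it suffices to treat the unit ball $B=B_1(0)$ and write $\phi(|u|)$ in place of $\phi(r^{-s}|u|)$. Fix a target level $\lambda_\infty>0$ (to be chosen at the end) and set
\begin{equation*}
  \lambda_j = \lambda_\infty \bigl(1-2^{-j}\bigr), \qquad r_j = 1+2^{-j}, \qquad A_j = \fint_{B_{r_j}} \phi\bigl(|S_{\lambda_j}u|\bigr)\,dx.
\end{equation*}
The aim is to derive a geometric recursion $A_{j+1} \le C \, b^{j}\, A_{j}^{1+\delta}$ with some $\delta>0$, since the standard nonlinear iteration lemma then gives $A_j\to 0$ provided $A_0$ is smaller than a computable constant. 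This in turn forces $|u|\le \lambda_\infty$ a.e.\ on $B$, and the proof is completed by choosing $\lambda_\infty$ proportional to the right-hand side of the theorem.

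The heart of the matter is a nonlocal Caccioppoli-type inequality for $S_\lambda u$. Testing \eqref{eq:nonlocal_equation} with $\psi = \eta_j^2 S_{\lambda_j} u$, where $\eta_j$ is a smooth cutoff with $\eta_j = 1$ on $B_{r_{j+1}}$ and $\eta_j = 0$ outside $B_{r_j}$, one needs the vectorial pointwise inequality
\begin{equation*}
  \phi'(|a-b|)\,\frac{a-b}{|a-b|}\cdot(S_\lambda a - S_\lambda b) \;\gtrsim\; \phi'(|S_\lambda a - S_\lambda b|)\,|S_\lambda a - S_\lambda b|,
\end{equation*}
which is the shortening analogue of the classical monotonicity $((a-b)_+ - (a'-b'_+))(a-b)\ge ((a-b)_+)^2$. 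Combined with the properties of $S_\lambda$ (it is $1$-Lipschitz and vanishes on $\{|u|\le\lambda\}$) and with standard cutoff manipulations for Orlicz seminorms controlled by the Simonenko indices $p,q$, this yields an estimate of the form
\begin{equation*}
  \int_{B_{r_{j+1}}}\!\int_{B_{r_{j+1}}} \phi\!\left(\frac{|S_{\lambda_j}u(x)-S_{\lambda_j}u(y)|}{|x-y|^s}\right)\dxyn
  \;\le\; C\, b^{j} \int_{B_{r_j}} \phi(|S_{\lambda_j}u|)\,dx + \text{tail},
\end{equation*}
where the tail term collects the long-range contributions coming from the cross-terms with $y\notin B_{r_j}$. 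These cross-terms are controlled using the definition of $\tail(u,B)$ together with $\lambda_\infty \ge \tail(u,B)$, which is arranged when $\lambda_\infty$ is taken large enough and explains the second summand in the claimed bound.

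The gain in integrability is then produced by a fractional Orlicz Poincaré--Sobolev embedding applied to $S_{\lambda_j}u$, which has compact support in $B_{r_j}$. Combining the embedding with H\"older's inequality on the super-level set $\{|u|>\lambda_j\}\cap B_{r_{j+1}}$, and using the elementary measure estimate
\begin{equation*}
  \bigl|\{|u|>\lambda_{j+1}\}\cap B_{r_{j+1}}\bigr| \;\le\; \frac{c\,2^{jq}}{\phi(\lambda_\infty)}\int_{B_{r_j}}\phi(|S_{\lambda_j}u|)\,dx,
\end{equation*}
(itself a consequence of Chebyshev together with $|S_{\lambda_j}u|\ge \lambda_{j+1}-\lambda_j = 2^{-j-1}\lambda_\infty$ on that set and the Simonenko bound $\phi(ct)\le c^q\phi(t)$), one arrives at the desired recursion $A_{j+1}\le C b^{j} A_j^{1+\delta}$.

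The main obstacle is the Caccioppoli step: one needs the shortening operator to be compatible, in a quantitative and $\lambda$-uniform way, with the nonlinear kernel $\phi'(|a-b|)(a-b)/|a-b|$ arising in \eqref{eq:nonlocal_equation}, and the nonlocal cross-terms arising from the cutoff $\eta_j^2$ must be absorbed into a finite tail expressed in terms of $\tail(u,B)$. Once this is established, the Sobolev embedding and the iteration argument are essentially standard. The fact that the shortening operator is \emph{linear in radial directions} and \emph{$1$-Lipschitz in all directions}, emphasized in Section~\ref{sec:truncation-operators}, is precisely what makes the vectorial Caccioppoli work.
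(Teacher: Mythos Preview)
Your overall iteration scheme matches the paper's, but the Caccioppoli step rests on a pointwise inequality that fails when the lower Simonenko index satisfies $p<2$. Take $\phi(t)=t^{3/2}$, $\lambda=1$, $N=2$, and set $a=(1+\epsilon,0)$, $b=(\sqrt{1-\delta^2},\delta)$ with $\epsilon\ll\delta\ll\sqrt{\epsilon}$ (e.g.\ $\delta=\epsilon^{3/4}$). Then $S_\lambda b=0$, $s\coloneqq|S_\lambda a-S_\lambda b|=\epsilon$, $t\coloneqq|a-b|\approx\delta$, and $(a-b)\cdot(S_\lambda a-S_\lambda b)\approx\epsilon^2$, so your left side is $\approx\epsilon^2/\sqrt{\delta}$ while the right side is $\approx\epsilon^{3/2}$; the ratio $(\epsilon/\delta)^{1/2}\to 0$. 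The underlying reason is geometric: in the vectorial setting $a-b$ and $S_\lambda a-S_\lambda b$ can be nearly orthogonal, so the only universal bound is $(a-b)\cdot(S_\lambda a-S_\lambda b)\ge|S_\lambda a-S_\lambda b|^2$, and converting $\phi'(t)s^2/t$ into $\phi'(s)s$ requires $t\mapsto\phi'(t)/t$ to be essentially nondecreasing, i.e.\ $p\ge 2$.

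The paper avoids this by not aiming for $\phi(|\delta_{x,y}^s S_\lambda u|)$ directly. Instead it tests with $\eta^q S_\gamma u$ at an \emph{intermediate} level $\gamma=\tfrac12(\lambda+\Lambda)$ and uses the sharper inequality of Lemma~\ref{lem:Slx-Sly-x-y},
\[
(S_\gamma a-S_\gamma b)\cdot(a-b)\ \ge\ \sigma_{x,y}\!\Big(\tfrac{|S_\gamma u|}{|u|}\Big)\,|a-b|^2,
\]
which yields a lower bound by the weighted \emph{full} energy $\sigma_{x,y}(|S_\gamma u|/|u|)\,\phi(|\delta_{x,y}^s u|)$, valid for all $p>1$. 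The weight is then traded back for level-set information: on $\set{|u|>\Lambda}$ one has $|S_\gamma u|/|u|\ge(\Lambda-\lambda)/(2\Lambda)$, giving $\phi(|\delta_{x,y}^s S_\Lambda u|)$ on the left, while on $\set{|u|>\gamma}$ one has $|u|\le\tfrac{2\Lambda}{\Lambda-\lambda}|S_\lambda u|$, turning every $|u|$ on the right into $|S_\lambda u|$ (this is Lemma~\ref{lem:nonlocal_Cacc}). Two minor points: $S_{\lambda_j}u$ is \emph{not} compactly supported in $B_{r_j}$, so you need the Poincar\'e--Sobolev estimate of Lemma~\ref{lem:Sob_Ponc_OrliczNL} with mean subtraction rather than a compact-support embedding; and the cutoff power should be $\eta^q$ rather than $\eta^2$ so that the Young's-inequality absorption in the cross term matches the good term for general $q$.
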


\begin{remark}
  \label{rem:moritz-tail-spaces}
	Theorem \ref{thm:local_boundedness_nonlocal} remains valid if the assumption $u\in W^{s,\phi}(\RRn,\RRN)$ is replaced by the weaker assumption $u\in W^{s,\phi}(2B ,\RRN)$ and $\tail (u,B) < \infty$.

  To allow for local minimizers relaxing the global condition $u\in W^{s,\phi}(\RRn,\RRN)$ it is better to consider the renormalized energy
  \begin{align*}
    \widetilde{\mathcal{J}}_\phi^s (v) \coloneqq \iint_{(\Omega^c \times \Omega^c)^c} \phi\left(\frac{\abs{v(x)-v(y)}}{\abs{x-y}^s}\right) \dxyn .
  \end{align*}
  This energy is minimized over all~$v$ such that $\widetilde{\mathcal{J}}_\phi^s(v)<\infty$ with $v=g$ on~$\Omega^c$. This approach has been introduced in~\cite{ServadeiValdinoci12,FelsingerKassmannVoigt15}. Our results remain valid in this situation.
\end{remark}

\subsection{Previous results}\label{sec:previous_results}

\subsubsection{Local case}

Since De Giorgi introduced his iteration method in 1957 (see \cite{DeGiorgi57}) it has proved flexible enough to be adapted by many authors to a great number of different settings. Here we focus on the generalizations most closely related to our work. In \cite[Chapters 4.7 and 5.3]{LadyUralt68} the technique was applied to nonlinear equations of $p$-Laplace type. Later, this was extended by Liebermann \cite{Liebermann91} to equations of more general Orlicz-type growth.
Moreover, in \cite{AquileraCaffarelli86,DieningScharleSueli21} the technique was even applied to finite element approximations of elliptic equations. All these are applications of the scalar De Giorgi technique.

The scalar technique can also be used on the level of the gradient to show $C^{1,\alpha}$ regularity. In this case the De Giorgi technique is applied to the scalar quantity $\abs{\nabla u}$ and can thus be used even for vectorial solutions. For systems of $p$-Laplace type this was shown by Uhlenbeck in \cite{Uhlenbeck77} for $2\leq p<\infty$  and by Tolksdorf \cite{Tolksdorf84} and Acerbi and Fusco \cite{AcerbiFusco89} for $1<p\leq 2$. 
For functionals with general Orlicz-type growth this result was obtained in \cite{DieningStroffVerde09}. The method was applied to the corresponding parabolic system~\cite{DieningScharleSchwarzacher19,OkScillaStroffolini24}.

Recently, De Giorgi-type iteration methods were also applied to obtain Schauder-type estimates for elliptic equations with general $p,q$-growth, see \cite{DFM1,DFM2}.

\subsubsection{Nonlocal case}

Let us first mention some works where De Giorgi type iteration methods have been applied to obtain local boundedness results for scalar nonlocal equations.  The De Giorgi type iteration was applied in~\cite{CaffChanVass11} to linear nonlocal equations.
For nonlinear nonlocal equations of fractional $p$-Laplace type, local boundedness was first obtained in \cite{DiCastroKuusiPalatucci2016}. The approach was axiomatized via fractional De Giorgi classes in \cite{Cozzi17} leading to corresponding results for slightly more general nonlinear nonlocal problems. The method was extended to nonlocal problems of general Orlicz type growth independently in the papers \cite{ChakerKimWeidner2022} and \cite{ByunKimOk2023}. It was also applied in~\cite{KassmannWeidner22arxivII,KassmannWeidner23arxiv} to parabolic problems.

The literature related to regularity theory of nonlocal vectorial problems is way less developed. In, for instance, \cite{DaLioRivi11,Schikorra15,MillotSire15,MillotPegonSchikorra21} the regularity of fractional harmonic mappings into spheres was investigated. In \cite{CaffDavila19}, $C^{0,\alpha}$ regularity was shown for bounded solutions of systems driven by linear translation invariant nonlocal operators with nonlinear right hand side depending on the solution. Nonlocal systems also arise in \emph{Peridynamics}, a nonlocal model of continuum mechanics. These systems incorporate nonlocal versions of the symmetric gradient appearing frequently in local models of continuum mechanics. Some regularity results for linear systems in this field were shown, e.g., in \cite{KassmannMengeshaScott19,MengeshaSchikSeesaneaYeepo24}.

Let us conclude this section by remarking that to the best of our knowledge no higher regularity results beyond local boundedness are known in the case of nonlinear nonlocal systems. This is in contrast to the scalar nonlinear nonlocal case where various higher regularity results are known, see for instance \cite{CaffChanVass11,DiCastroKuusiPalatucci2016,BrascoLindgSchikorra18,DeFilippisPalatucci19,ChakerKimWeidner2022,ByunOk22,BonderSalortVivas22,ByunKimOk2023} for $C^{0,\alpha}$-regularity, \cite{DicastroKuusiPala14,ChakerKimWeidner23} for Harnack inequalities, \cite{KuusiMingioneSire15,KimLeeLee23,DieningKimLeeNowak24} for potential estimates and \cite{BrascoLindgren17,Nowak23,DieningNowak23,ByunKim23} for higher Sobolev regularity. For these reasons, an interesting question for future investigation is whether higher regularity results, especially Hölder regularity, hold for nonlinear nonlocal systems.

\subsection{Convex hull property}

Besides local boundedness we show a \emph{convex hull property} both in the local and the nonlocal case. It states that if $u$ is a minimizer of \eqref{eq:local_energy} or \eqref{eq:nonlocal_energy}, then the values that $u$ attains in the interior of $\Omega$ are always contained in the (closed) convex hull of the values attained on $\partial \Omega$.  This is a generalization of the maximum principle to the vectorial setting. Indeed, if $u:\RRn \rightarrow \setR$, i.e., in the scalar setting, then the convex hull of the values attained at the boundary is given by the interval
\begin{align*}
	\big[\inf _{x\in \partial \Omega} u(x),\sup_{x\in \partial \Omega} u(x)\big]\subset \setR ,
\end{align*} 
and the property that $u$ attains only values inside this interval becomes the usual maximum principle. 

The convex hull properties are stated in Lemma \ref{lem:convexhull_local} and Lemma \ref{lem:convexhull_nonlocal}. Their proofs are short and allow for very general $\phi$, the only requirement being that uniqueness of minimizers of $\mathcal{J}_\phi$ resp. $\mathcal{J}_\phi^s$ is ensured.

In the local case, convex hull properties have been proven in \cite{OttavioLeonettiMusciano96,BildhFuchs02}, where they are used to obtain regularity results. In \cite{DieningKreuzerSchwarzacher2013} the convex hull property is extended to finite element minimizers of $\mathcal{J}_\phi$.

The rest of this paper is structured as follows. In Section \ref{sec:truncation-operators} we discuss the projection, truncation and shortening operator. The necessary background on N-functions and Orlicz spaces is collected in Section \ref{sec:orlicz}. In Section \ref{sec:convex-hull-property-1} we show convex hull properties in the local and nonlocal case. Section \ref{sec:de-giorgi-method} contains the De Giorgi technique and the proof of our main results. For that we need improved \Poincare~estimates which we show in the Appendix \ref{sec:appendix}.

\section{Projections, truncations and shortening operators}
\label{sec:truncation-operators}

In this section we introduce three useful operators: a projection, a truncation operator and a shortening operator. The projection is used later in the convex hull property. The truncation is a special case of the projection. The shortening operator will be used in the De~Giorgi technique. The operators are depicted in Figure~\ref{fig:PiTandS}.

We begin with a bit of standard notation.  When we write $a\lesssim b$ we mean that there exists a constant $c>0$, independent of all important appearing quantities, such that $a\leq cb$.  For a scalar function~$f$ we define $f_+(x) \coloneqq \max \set{f(x),0}$.  If $U\subset \RRn$ is a measurable set, we denote by $\abs{U}$ the Lebesgue measure $\mathcal{L}^n(U)$. If $\abs{U}\leq \infty$ and $f\in L^1(U)$ we define
\begin{align*}
	\mean{f}_{U}\coloneqq \fint_U f(x) \,dx \coloneqq \frac{1}{\abs{U}}\int_U f(x)\,dx.
\end{align*}
Let $B\subset \RRn$ be a ball. Usually we denote the radius of $B$ with $r$. For $\lambda >0$, we denote by $\lambda B$ the ball with the same center and radius $\lambda r$.

We use the convention of \emph{standing gradients}, i.e., if $u:\RRn \rightarrow \setR$, then $\nabla u (x)$ is a column vector for every $x$. Accordingly, if $u:\RRn \rightarrow \RRN$ then $(\nabla u(x) )_{ij}=\partial_i u_j(x)$ for all $1\leq j \leq n$, $1\leq j \leq N$ and $x\in \RRn$. For matrices $A,B\in \RRnn$, we denote $A:B\coloneqq \sum_{1\leq ij\leq n} A_{ij}B_{ij}$ and for vectors $v,w\in \RRn$ we define the outer product $u\otimes v\coloneqq (u_iv_j)_{1\leq i,j\leq n}$.

\subsection{Projection to convex, closed sets}
\label{sec:proj-conv-clos}

For a convex, closed set $K \subset \RRN$ we define the projection operator $\Pi_K\,:\, \RRN \to K$ as the closest point projection to~$K$. It has been shown in~\cite[Lemma~3.1]{DieningKreuzerSchwarzacher2013} that (see Figure~\ref{fig:PiTandS})
\begin{align}
  \label{eq:PiKfirst}
  (a - \Pi_K a) \cdot (b- \Pi_K a) \leq 0
\end{align}
for all $a\in \RRn$ and all $b \in K$. As a consequence
\begin{align*}
  \abs{\Pi_K a - \Pi_K b}^2
  &= (\Pi_K a - \Pi_K b)  \cdot (\Pi_K a - \Pi_K b)
  \\
  &= (a-b) \cdot (\Pi_K a - \Pi_K b)
  \\
  &\quad 
    + (\Pi_K a-a) \cdot (\Pi_K a - \Pi_K b) 
    + (b-\Pi_Kb) \cdot (\Pi_K a - \Pi_K b)
  \\
  &\leq
    (a-b) \cdot (\Pi_K a - \Pi_K b).
\end{align*}
This implies that for all $a,b \in \RRN$
\begin{align}
  \label{eq:PiKdifference}
  \abs{\Pi_K a - \Pi_K b}^2
  &\leq     (a-b) \cdot (\Pi_K a - \Pi_K b) \leq \abs{a-b}^2.
\end{align}
In particular, $\Pi_K$ is a contraction. For $v \in W^{1,1}(\Omega,\RRN)$ define $\Pi_K v$ by $(\Pi_K v)(x) = \Pi_K(v(x))$. Then~\eqref{eq:PiKdifference} implies
\begin{align}
  \label{eq:PiKgradient}
  \abs{\nabla (\Pi_K v)} &\leq \abs{\nabla v}.
\end{align}

\subsection{Truncation operator}
\label{sec:truncation-operator}

For $\lambda \geq 0$ we define the vectorial truncation operator $T_\lambda\,:\, \RRN \to \RRN$ by (see Figure~\ref{fig:PiTandS})
\begin{align}
  \label{eq:defTlambda}
  T_\lambda(a) 
  &\coloneqq \min \bigset{\abs{a},\lambda} \frac{a}{\abs{a}} =
    \begin{cases}
      a &\quad \text{for $\abs{a} \leq \lambda$},
      \\
      \lambda \frac{a}{\abs{a}} &\quad \text{for $\abs{a} > \lambda$}.
    \end{cases}
\end{align}
Note that $T_\lambda$ is just a special case of the projection~$\Pi_K$ defined in Section~\ref{sec:proj-conv-clos} with $K=\overline{B_\lambda(0)}$. For $v\in W^{1,1}(\Omega,\RRN)$, we define $\left(T_\lambda v\right)(x)\coloneqq T_\lambda \left(v(x)\right)$. Hence, with~\eqref{eq:PiKdifference} and \eqref{eq:PiKgradient} we obtain
\begin{align}
  \label{eq:Tlambda}
  \begin{aligned}
    \abs{T_\lambda a} &= \min \set{\abs{a},\lambda},
    \\
    \abs{T_\lambda a - T_\lambda b}^2
    &\leq     (a-b) \cdot (T_\lambda a - T_\lambda b) \leq \abs{a-b}^2,
    \\
    \abs{\nabla T_\lambda v} &\leq \abs{\nabla v}.
  \end{aligned}
\end{align}
In particular, $T_\lambda$ is a contraction. These estimates are suitable for the convex hull property.
However, for the De~Giorgi technique we need finer estimate.

\begin{lemma}
  \label{lem:Tla-Tlb-a-b}
  Let $\lambda>0$ and let $a,b\in
  \RRN$. Then
  \begin{align*}
  	(T_\lambda a - T_\lambda b)
  	\cdot (a-b) &\leq \frac 12 \left(\frac{\abs{T_\lambda a}}{\abs{a}}+\frac{\abs{T_\lambda b}}{\abs{b}}\right) \abs{a-b}^2.
  \end{align*}
\end{lemma}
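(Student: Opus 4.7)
The plan is to parametrize the truncations radially. Writing $\alpha \coloneqq \min\{1,\lambda/\abs{a}\} = \abs{T_\lambda a}/\abs{a}$ and $\beta \coloneqq \min\{1,\lambda/\abs{b}\} = \abs{T_\lambda b}/\abs{b}$, the definition of $T_\lambda$ gives $T_\lambda a = \alpha a$ and $T_\lambda b = \beta b$. (I would dispose of the degenerate case $a=0$ or $b=0$ at the start by continuity, using the natural convention $\abs{T_\lambda 0}/\abs{0} \coloneqq 1$; in any case the inequality is trivial there since $T_\lambda 0 = 0$.)

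With these scalars in hand, I would simply expand both sides of the claimed inequality. The left side becomes
\begin{align*}
  (\alpha a - \beta b)\cdot (a-b) = \alpha \abs{a}^2 + \beta \abs{b}^2 - (\alpha+\beta)\,a\cdot b,
\end{align*}
while the right side is
\begin{align*}
  \tfrac{1}{2}(\alpha+\beta)\abs{a-b}^2 = \tfrac{1}{2}(\alpha+\beta)\abs{a}^2 + \tfrac{1}{2}(\alpha+\beta)\abs{b}^2 - (\alpha+\beta)\,a\cdot b.
\end{align*}
Subtracting and collecting terms, the cross terms cancel and the difference reduces cleanly to
\begin{align*}
  \tfrac{1}{2}(\alpha+\beta)\abs{a-b}^2 - (T_\lambda a - T_\lambda b)\cdot(a-b) = \tfrac{1}{2}(\alpha - \beta)\bigl(\abs{b}^2 - \abs{a}^2\bigr).
\end{align*}

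So the lemma is equivalent to the single inequality $(\alpha-\beta)(\abs{b}^2-\abs{a}^2)\geq 0$. This is the key step, and it is just the monotonicity observation that $r\mapsto \min\{1,\lambda/r\}$ is non-increasing on $(0,\infty)$: thus $\abs{a}\le \abs{b}$ forces $\alpha\ge\beta$, and conversely, so the two factors always share a sign. There is no real obstacle here, the whole argument is an algebraic identity plus a one-line monotonicity check. I would close by remarking that this refines the crude contraction bound in \eqref{eq:Tlambda}, which only gives the factor $1$ in place of $\tfrac12(\alpha+\beta)$, and this refinement is what makes the estimate useful for the De~Giorgi iteration (where $a$ and $b$ below the truncation level contribute zero to the left-hand side).
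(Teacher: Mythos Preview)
Your proof is correct and is genuinely slicker than the paper's. The paper proceeds by a four-case analysis depending on whether each of $\abs{a},\abs{b}$ lies above or below~$\lambda$, and does separate algebra in each case (using, for instance, the identity $\abs{a}\abs{b}-a\cdot b = \tfrac12(\abs{a-b}^2 - \abs{\abs{a}-\abs{b}}^2)$ in the case $\abs{a},\abs{b}\geq\lambda$). Your radial parametrization $T_\lambda a = \alpha a$, $T_\lambda b = \beta b$ collapses all four cases into a single algebraic identity plus the one-line monotonicity of $r\mapsto\min\{1,\lambda/r\}$. This buys a case-free argument with no loss of information; the paper's approach is more hands-on but arrives at the same inequality with more bookkeeping.

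One small slip in your closing commentary: when both $\abs{a},\abs{b}\leq\lambda$ the left-hand side is $\abs{a-b}^2$, not zero (you may be thinking of the companion operator $S_\lambda = \identity - T_\lambda$, where points below the threshold do vanish). This does not affect the proof itself.
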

\begin{proof}
  We distinguish three cases:
  \begin{enumerate}
  \item If $\abs{a},\abs{b} \leq \lambda$, then the claim
    simplifies to~$(a-b)\cdot
    (a-b) \leq \abs{a-b}^2$, which
    is obviously true.
  \item If $\abs{a}, \abs{b} \geq
    \lambda$, then the claim follows from
    \begin{align*}
      (T_\lambda a - T_\lambda
      b) \cdot (a-b) 
      &= \big(
        \tfrac{\lambda}{\abs{a}} a -
        \tfrac{\lambda}{\abs{b}}
        b \big) \cdot (a-b)
      \\
      &= \lambda \big(\tfrac{1}{\abs{a}} +
        \tfrac{1}{\abs{b}}\big) \big(
        \abs{a} \abs{b} - a \cdot b \big)
      \\
      &=  \lambda \big(\tfrac{1}{\abs{a}} +
        \tfrac{1}{\abs{b}}\big) \tfrac 12 \big( \abs{a-b}^2 - \bigabs{\abs{a} -
        \abs{b}}^2 \big)
        \\
      &\leq \tfrac 12 \big( \tfrac{\lambda}{\abs{a}} +
          \tfrac{\lambda}{\abs{b}} \big) \abs{a-b}^2.
    \end{align*}
  \item If $\abs{b} \leq \lambda < 
    \abs{a}$, then with $\gamma \coloneqq \frac{\lambda}{\abs{a}} \in [0,1)$
    \begin{align*}
      (T_\lambda a - T_\lambda
      b) \cdot (a-b) 
      &= \big( \gamma a -
        b \big) \cdot (a-b)
      \\
      &= \gamma \abs{a}^2 +
        \abs{b}^2  - (\gamma+1) a
        \cdot b
      \\
      &= \tfrac{\gamma+1}{2}
        \abs{a-b}^2 -
        \tfrac{1-\gamma}{2}
        (\abs{a}^2-\abs{b}^2).
    \end{align*}
    This, $\gamma \in [0,1)$ and $\abs{b} < \abs{a}$ proves our claim
    \begin{align*}
      (T_\lambda a - T_\lambda
      b) \cdot (a-b) 
      \leq \tfrac 12 (\gamma+1)
      \abs{a-b}^2.
    \end{align*}
  \item The case $\abs{a} \leq \lambda < \abs{b}$ follows by symmetry.
  \end{enumerate}
  This proves our claim.
\end{proof}

The chain rule implies
\begin{align}
  \label{eq:Tlambdau-gradient}
  \begin{aligned}
    \nabla T_\lambda v
    &= \indicator_{\set{\abs{v}\leq \lambda}} \nabla v + \indicator_{\set{\abs{v}>\lambda}} \frac{\lambda}{\abs{v}} \nabla v \bigg(\identity - \frac{v}{\abs{v}} \otimes \frac{v}{\abs{v}}\bigg).
  \end{aligned}
\end{align}
Hence,
\begin{align}
  \label{eq:Tlambdau-gradient-gradient}
  \nabla v : \nabla T_\lambda v
  &= \indicator_{\set{\abs{v}\leq \lambda}} \abs{\nabla v}^2 +  \indicator_{\set{\abs{v}>\lambda}} \frac{\lambda}{\abs{v}} \big( \abs{\nabla v}^2 - \abs{\nabla \abs{v}}^2\big)
\end{align}
and
\begin{align}
  \label{eq:Tlambdau-gradient2}
  \abs{\nabla T_\lambda v}^2
  &= \indicator_{\set{\abs{v}\leq \lambda}} \abs{\nabla v}^2 +  \indicator_{\set{\abs{v}>\lambda}} \bigg(\frac{\lambda}{\abs{v}}\bigg)^2 \big( \abs{\nabla v}^2 - \abs{\nabla \abs{v}}^2\big).
\end{align}
It follows from~\eqref{eq:Tlambdau-gradient-gradient} and~\eqref{eq:Tlambdau-gradient2} that
\begin{align}
  \label{eq:Tlambdau-ineq}
  \abs{\nabla T_\lambda v}^2
  &\leq
    \nabla v : \nabla T_\lambda v
    \leq \abs{\nabla v}^2.
\end{align}

\subsection{Shortening operator}
\label{sec:shortening-operator}

We define the shortening operator $S_\lambda: \RRN \to \RRN$ by
\begin{align}
  \label{eq:defSlambda}
  S_\lambda a
  &\coloneqq a - T_\lambda a
    =  \big(\abs{a}-\lambda)_+ \frac{a}{\abs{a}}
    =
     \begin{cases}
       0 &\quad \text{for $\abs{a} \leq \lambda$},
       \\
      (\abs{a}-\lambda) \frac{a}{\abs{a}} &\quad \text{for $\abs{a} > \lambda$}.
    \end{cases}
\end{align}
See Figure~\ref{fig:PiTandS} for a visualization. For $v\in W^{1,1}(\Omega,\RRN)$, we define $\left(S_\lambda v\right)(x)\coloneqq S_\lambda \left(v(x)\right)$. We obtain immediately
\begin{align}
  \label{eq:Slambdaabs}
  \abs{S_\lambda a} &= (\abs{a}-\lambda)_+,
\end{align}
By the definition of $S_\lambda$ we have for all $a,b\in \RRn$
\begin{align*}
	(a-b)(S_\lambda a-S_\lambda b) &= \abs{a-b}^2-(a-b)(T_\lambda a- T_\lambda b). 
\end{align*}
This and \eqref{eq:Tlambda} imply
\begin{align}\label{eq:Slambda}
	\abs{S_\lambda a-S_\lambda b}^2 \leq (a-b)\cdot(S_\lambda a -S_\lambda b) \leq \abs{a-b}^2.
\end{align}
In particular, $S_\lambda$ is a contraction. Again for the De Giorgi method we need some refined estimates.
\begin{lemma}
  \label{lem:Slx-Sly-x-y}
  Let $\lambda>0$ and let $a,b\in
  \RRn$. Then
  \begin{align*}
    (S_\lambda a - S_\lambda b)
    \cdot (a-b) &\geq \frac 12 \bigg(
                  \frac{\abs{S_\lambda a}}{\abs{a}} +
                  \frac{\abs{S_\lambda b}}{\abs{b}} 
                  \bigg) \abs{a-b}^2.
  \end{align*}
\end{lemma}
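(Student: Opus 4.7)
The plan is to exploit the decomposition $S_\lambda = \identity - T_\lambda$ that appears already in~\eqref{eq:defSlambda}, combined with Lemma~\ref{lem:Tla-Tlb-a-b} which provides an upper bound for the analogous inner product involving the truncation operator. Writing $a-b = (T_\lambda a - T_\lambda b) + (S_\lambda a - S_\lambda b)$ and taking the inner product with $a-b$ yields
\begin{align*}
  (S_\lambda a - S_\lambda b)\cdot (a-b) &= \abs{a-b}^2 - (T_\lambda a - T_\lambda b)\cdot(a-b),
\end{align*}
so an upper bound for the $T_\lambda$-term immediately becomes a lower bound for the $S_\lambda$-term. This is exactly the structure of Lemma~\ref{lem:Tla-Tlb-a-b}.

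Next I would substitute the estimate of Lemma~\ref{lem:Tla-Tlb-a-b} and rewrite the prefactor using the elementary identity
\begin{align*}
  \frac{\abs{T_\lambda a}}{\abs{a}} + \frac{\abs{S_\lambda a}}{\abs{a}} = 1 \qquad (a\neq 0),
\end{align*}
which holds because $T_\lambda a$ and $S_\lambda a$ both point along $a/\abs{a}$ and their magnitudes $\min\{\abs{a},\lambda\}$ and $(\abs{a}-\lambda)_+$ add up to $\abs{a}$. Applying this identity for both $a$ and $b$ converts $\tfrac12\bigl(\tfrac{\abs{T_\lambda a}}{\abs{a}}+\tfrac{\abs{T_\lambda b}}{\abs{b}}\bigr) = 1 - \tfrac12\bigl(\tfrac{\abs{S_\lambda a}}{\abs{a}}+\tfrac{\abs{S_\lambda b}}{\abs{b}}\bigr)$, and the $\abs{a-b}^2$ terms combine to give precisely the claimed inequality.

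The only subtlety is the convention when $a=0$ or $b=0$: in that case $S_\lambda$ and $T_\lambda$ both vanish, so the ratios $\abs{S_\lambda a}/\abs{a}$ (and $\abs{T_\lambda a}/\abs{a}$) should be read as $0$ (equivalently, $1$ for $\abs{T_\lambda a}/\abs{a}$ is the natural limit, but either convention makes both the statement of Lemma~\ref{lem:Tla-Tlb-a-b} and the present lemma trivial since all three terms in the estimate vanish). With this harmless convention in place, no case analysis beyond what is already done for $T_\lambda$ is needed; the entire argument is one short algebraic manipulation and I do not anticipate any real obstacle. The proof is essentially a one-line corollary of Lemma~\ref{lem:Tla-Tlb-a-b} once the complementarity relation $T_\lambda + S_\lambda = \identity$ is used.
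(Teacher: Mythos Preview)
Your proof is correct and follows exactly the same route as the paper: use $S_\lambda = \identity - T_\lambda$ to write $(S_\lambda a - S_\lambda b)\cdot(a-b) = \abs{a-b}^2 - (T_\lambda a - T_\lambda b)\cdot(a-b)$, then apply Lemma~\ref{lem:Tla-Tlb-a-b} and the complementarity $\tfrac{\abs{T_\lambda a}}{\abs{a}} + \tfrac{\abs{S_\lambda a}}{\abs{a}} = 1$. The paper's proof is even terser (it omits the complementarity step explicitly), and your discussion of the $a=0$ edge case is more careful than the paper's, though your parenthetical remark that ``all three terms vanish'' is not quite right when only one of $a,b$ is zero---the inequality is still easily verified in that case with the convention $\abs{S_\lambda 0}/\abs{0}=0$.
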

\begin{proof}
  Since $S_\lambda = \identity - T_\lambda$ we have
  \begin{align*}
    (S_\lambda a - S_\lambda b)
    \cdot (a-b) &= \abs{a-b}^2 -(T_\lambda a - T_\lambda b)
                  \cdot (a-b). 
  \end{align*}
  Now, the claim  follows directly from Lemma~\ref{lem:Slx-Sly-x-y}.
\end{proof}

Using $S_\lambda = \identity - T_\lambda$ we obtain from~\eqref{eq:Tlambdau-gradient}
\begin{align}
  \label{eq:Slambdau-gradient}
  \begin{aligned}
    \nabla S_\lambda v
    &= \indicator_{\set{\abs{v}>\lambda}} \bigg(  \frac{\abs{v}-\lambda}{\abs{v}}\nabla v + \frac{\lambda}{\abs{v}}\nabla v\, \bigg(\frac{v}{\abs{v}} \otimes \frac{v}{\abs{v}} \bigg)\bigg).
  \end{aligned}
\end{align}
Thus,
\begin{align}
  \label{eq:Slambdau-gradient-gradient-aux}
  \begin{aligned}
    \nabla v : \nabla S_\lambda v &= \indicator_{\set{\abs{v}> \lambda}} \bigg( \frac{\abs{v}-\lambda}{\abs{v}} \abs{\nabla v}^2 + \frac{\lambda}{\abs{v}} \bigabs{\nabla \abs{v}}^2\bigg)
  \end{aligned}
\end{align}
and
\begin{align}
  \label{eq:Slambdau-gradient2}
  \begin{aligned}
    \abs{\nabla S_\lambda v}^2 &= \indicator_{\set{\abs{v}>\lambda}} \Bigg( \bigg(\frac{\abs{v}-\lambda}{\abs{v}}\bigg)^2 \abs{\nabla v}^2 + \bigg( 1 - \bigg(\frac{\abs{v}-\lambda}{\abs{v}} \bigg)^2\bigg) \bigabs{\nabla \abs{v}}^2 \Bigg).
  \end{aligned}
\end{align}
Thus, \eqref{eq:Slambdau-gradient-gradient-aux} and \eqref{eq:Slambdau-gradient2} imply
\begin{align}
  \label{eq:Slambdau-gradient-ineq1}
    \abs{\nabla S_\lambda v}^2
  &\leq
    \nabla v : \nabla S_\lambda v
    \leq \abs{\nabla v}^2.
\end{align}
and
\begin{align}
  \label{eq:Slambdau-gradient-ineq2}
  \frac{\abs{S_\lambda v}}{\abs{v}} \abs{\nabla v} =   \frac{(\abs{v}-\lambda)_+}{\abs{v}} \abs{\nabla v} &\leq   \abs{\nabla S_\lambda v} \leq \abs{\nabla v}.
\end{align}
The most important estimate for the De Giorgi method follows by~  \eqref{eq:Slambdau-gradient-gradient-aux}
\begin{align}
  \label{eq:Slambdau-low}
  \nabla v : \nabla S_\lambda v    
  &\geq \indicator_{\set{\abs{v}> \lambda}}  \frac{\abs{v}-\lambda}{\abs{v}} \abs{\nabla v}^2 = \frac{\abs{S_\lambda v}}{\abs{v}} \abs{\nabla v}^2.
\end{align}
For all $\gamma > \lambda > 0$ and all $a \in \RRN$ with $\abs{a}\geq \gamma$ we have
\begin{align}
  \label{eq:Slambdagamma}
  \abs{a} &= \frac{\abs{a}}{\abs{a} -\lambda} \abs{S_\lambda a} \leq \frac{\gamma}{\gamma - \lambda} \abs{S_\lambda a}.
\end{align}

\section{Orlicz spaces and N-functions}
\label{sec:orlicz}
In this section we collect some background on N-functions and Orlicz spaces necessary to define the local and nonlocal systems we study. 

An \emph{N-function} (the N stands for ``nice'') is a function $\phi :[0,\infty)\rightarrow [0,\infty)$ such that $\phi$ is convex, left-continuous, $\phi(0)=0$, $\lim_{t\rightarrow 0}\frac{\phi(t)}{t}=0$ and $\lim_{t\rightarrow \infty} \frac{\phi(t)}{t}=\infty$. The most important examples are $\phi(t)=\frac 1p t^p$ for $p\in (1,\infty)$, but also more general growth conditions like for example $\phi(t)=\frac 1p t^p +\frac 1q t^q$ are included.

For an N-function $\phi$ and an open set $\Omega \subset \RRn$ the \emph{Orlicz space} is defined by
\begin{align*}
	L^\phi(\Omega,\RRN) \coloneqq \Big\{v\in L^1_\loc (\Omega,\RRN): \int_\Omega \phi\left(\frac{\abs{v(x)}}{\lambda}\right) \,dx<\infty \text{ for some } \lambda>0 \Big\},
\end{align*}
equipped with the Luxemburg norm
\begin{align*}
	\norm{v}_{L^\phi(\Omega,\RRN)}\coloneqq  \inf \Big\{ \lambda >0 : \int_\Omega \phi\left(\frac{\abs{v(x)}}{\lambda}\right) \,dx \leq 1 \Big\}.
\end{align*}
Furthermore, we define the Orlicz-Sobolev space
\begin{align*}
	W^{1,\phi}(\Omega,\RRN)\coloneqq \set{v\in L^\phi (\Omega,\RRN): \abs{\nabla v}\in L^\phi (\Omega)}
\end{align*} 
and equip it with the norm $\norm{v}_{W^{1,\phi}(\Omega,\RRN)}\coloneqq \norm{v}_{L^\phi(\Omega,\RRN)}+\norm{\nabla v}_{L^\phi(\Omega,\RRN)}$.

For every N-function $\phi$ there exists a non-decreasing right-derivative $\phi'$ satisfying $\phi(t)=\int_0^t\phi'(s)\,ds$ for all $t>0$. The \emph{conjugate N-function} $\phi^*$ is defined via
\begin{align*}
	\phi^*(s)\coloneqq \sup_{t\geq 0} (st - \phi(t)).
\end{align*}
We have $(\phi^*)^*(t)=\phi(t)$ and $(\phi^*)'(\phi'(t))=t$ for all $t\geq 0$.
The lower- and upper \emph{Simonenko indices} $p$ and $q$ of $\phi$ are defined by
\begin{align*}
	p\coloneqq \inf_{t>0}\frac{\phi'(t)t}{\phi(t)} \leq \sup_{t>0}\frac{\phi'(t)t}{\phi(t)} \eqqcolon q.
\end{align*}
For example, the N-function $\phi(t)=\frac 1p t^p + \frac 1q t^q$ with $1<p\leq q<\infty$ has indices $p$ and $q$. For every N-function with upper Simonenko index $q<\infty$ there exists a constant $c>0$ such that $\phi(2t)\leq c \phi(t)$ for all $t\geq 0$. $c$ is often called the $\Delta_2$-constant of $\phi$.

It is well known that for all $s,t \geq 0$ we have
\begin{align}\label{eq:simonenko_grwoth}
	\min \bigset{s^{p}, s^{q}} \phi(t) \leq \phi(st) \leq
	\max \bigset{s^{p}, s^{q}} \phi(t).
\end{align}
It holds
\begin{align}\label{eq:orlicz1}
	2^{-\frac{p}{p-1}}\phi(s)\leq \phi^\ast (\phi'(s))\leq 2^q \phi(s),
\end{align}
or in short $\phi^\ast (\phi'(s))\eqsim \phi(s)$. We have the following version of Young's inequality: for every $\epsilon >0$ and for all $s,t\geq 0$ we have
\begin{align}\label{eq:young_with_phi}
	\begin{aligned}
		st&\leq \epsilon\phi(s)+\epsilon^{1-\frac{p}{p-1}} \phi^\ast (t), \qquad\text{and}\\
		st&\leq \epsilon^{1-q}\phi(s)+\epsilon \phi^\ast (t) .
	\end{aligned}
\end{align}
Combining this with \eqref{eq:orlicz1} we get for all $s,t\geq 0$
\begin{align}\label{eq:young_with_phi2}
	\phi'(s)t \leq C_\epsilon\phi(s)+\epsilon\phi(t) .
\end{align}

\section{Convex hull property}
\label{sec:convex-hull-property-1}

In this section we prove the convex hull property for $\phi$-harmonic functions in the local and nonlocal case. The convex hull property is the natural extension of the maximum principle to the vectorial setting. For our proof we need uniqueness of minimizers and therefore assume that $\phi$ is strictly convex, i.e., that we have
\begin{align*}
	\phi\left(\frac{s+t}{2}\right)<\frac{\phi(s)+\phi(t)}{2} \qquad \text{for all $0\leq s<t$.}
\end{align*}

\subsection{Local case}
\label{sec:chull-local-case}

Let $\phi$ be a strictly convex N-function with Simonenko indices $1\leq p\leq q<\infty$ and $\Omega \subset \RRn$ be a bounded domain with Lipschitz boundary and let $u_D \in W^{1,\phi}(\Omega,\RRN)$. For $v \in W^{1,\phi}(\Omega,\RRN)$ we consider the energy
\begin{align*}
  \mathcal{J}_\phi(v) = \int_\Omega \phi(\abs{\nabla v})\,dx.
\end{align*}
Suppose that $u$ is $\phi$-harmonic on~$\Omega$ with boundary values~$u_D$ in the sense that
\begin{align*}
  u = \argmin_{v \in u_D + W^{1,\phi}_0(\Omega,\RRN)} \mathcal{J}_\phi(v).
\end{align*}
We define the closed convex hull of the boundary values of $u$ by
\begin{align*}
	 \cconvexhull u(\partial \Omega)\coloneqq \bigcap \set{K\subset\RRN \text{ convex, closed}: u(x)\in K\text{ for almost all $x\in \partial \Omega$}},
\end{align*}
where the ``almost'' has to be understood in the sense of $(n-1)$-dimensional Hausdorff measure.

\begin{lemma}\label{lem:convexhull_nonlocal}
  There holds
  \begin{align*}
    u(x) \in \cconvexhull u(\partial \Omega)\qquad\text{for almost all $x\in \Omega$}.
  \end{align*}
  As a consequence $\norm{u}_{L^\infty(\Omega,\RRN)} \leq
    \norm{u}_{L^\infty(\partial \Omega,\RRN)}$.
\end{lemma}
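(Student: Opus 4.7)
The plan is to use the pointwise projection onto $K \coloneqq \cconvexhull u(\partial \Omega)$ as a competitor and exploit uniqueness. Let $w(x) \coloneqq \Pi_K u(x)$. Since $K$ is closed and convex, $\Pi_K$ is $1$-Lipschitz by \eqref{eq:PiKdifference}, so $w \in W^{1,\phi}(\Omega, \RRN)$ with $\abs{\nabla w} \leq \abs{\nabla u}$ almost everywhere by \eqref{eq:PiKgradient}. Because $\phi$ is nondecreasing, this immediately yields $\mathcal{J}_\phi(w) \leq \mathcal{J}_\phi(u)$. To see that $w$ is admissible, observe that the trace of $u$ equals $u_D$ and that $u_D(x) \in K$ for $\mathcal{H}^{n-1}$-a.e.\ $x \in \partial \Omega$ by the very definition of $K$, so $\Pi_K u_D = u_D$ on $\partial \Omega$. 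A standard approximation argument (using that composition with a $1$-Lipschitz map commutes with taking traces on $W^{1,\phi}$) gives $\operatorname{tr} w = \operatorname{tr}(\Pi_K u) = \Pi_K(\operatorname{tr} u) = \Pi_K u_D = u_D$, hence $w - u_D \in W^{1,\phi}_0(\Omega, \RRN)$.

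The minimality of $u$ combined with $\mathcal{J}_\phi(w) \leq \mathcal{J}_\phi(u)$ forces equality, and then uniqueness of minimizers gives $w = u$ a.e., i.e.\ $u(x) \in K$ a.e. The main obstacle is precisely this uniqueness step, because the integrand $\xi \mapsto \phi(\abs{\xi})$ is only convex — not strictly convex — as it is linear along rays through the origin. I would resolve it by the usual midpoint argument: if $u_1$ and $u_2$ are two minimizers sharing the boundary data~$u_D$, then $(u_1+u_2)/2$ is admissible, and minimality forces simultaneous equality in the convex combination estimate and the Euclidean triangle inequality $\abs{\nabla u_1 + \nabla u_2} \leq \abs{\nabla u_1} + \abs{\nabla u_2}$. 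Strict convexity of $\phi$ yields $\abs{\nabla u_1} = \abs{\nabla u_2}$ a.e., while equality in the triangle inequality forces $\nabla u_1, \nabla u_2$ to be nonnegatively parallel; together these give $\nabla u_1 = \nabla u_2$ a.e., and a Poincar\'e inequality applied to $u_1 - u_2 \in W^{1,\phi}_0(\Omega,\RRN)$ closes the argument.

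Finally, the $L^\infty$ claim is immediate once the convex hull property is established: the ball $\overline{B_R(0)}$ with $R \coloneqq \norm{u}_{L^\infty(\partial \Omega,\RRN)}$ is closed and convex and contains $u(\partial \Omega)$ a.e., so $K \subseteq \overline{B_R(0)}$ and therefore $\abs{u(x)} \leq R$ a.e.\ in $\Omega$.
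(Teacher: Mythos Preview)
Your argument is correct and follows the same route as the paper: project onto $K=\cconvexhull u(\partial\Omega)$, use $\abs{\nabla(\Pi_K u)}\leq\abs{\nabla u}$ to get a competitor with no larger energy and the same boundary values, and invoke uniqueness of the minimizer. The paper simply asserts uniqueness from strict convexity of~$\phi$, whereas you spell out the standard midpoint argument needed because $\xi\mapsto\phi(\abs{\xi})$ is not strictly convex; this extra care is appropriate and your version of it (equality in the triangle inequality for the Frobenius norm forces the gradients to be nonnegative multiples of each other, and $\abs{\nabla u_1}=\abs{\nabla u_2}$ then gives $\nabla u_1=\nabla u_2$) is correct.
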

\begin{proof}
  Let $K :=\cconvexhull u(\partial \Omega)$. Then $\Pi_K u = u$ on~$\partial \Omega$, so it has the same boundary values. Due to~\eqref{eq:PiKgradient} we have $ \mathcal{J}_\phi(\Pi_K u) \leq \mathcal{J}_\phi(u)$. Since $\phi$ is strictly convex, the minimizer of $\mathcal{J}_\phi$ is unique. Hence, $u = \Pi_K u$. This proves the claim.
\end{proof}

\subsection{Nonlocal case}
\label{sec:chull-non-local-case}

Let $\Omega \subset \RRn$ be a bounded domain with Lipschitz boundary. Let $u_D \in W^{s,\phi}(\RRn,\RRN)$. For $v \in W^{s,\phi}(\RRn,\RRN)$ we consider the energy
\begin{align*}
  \mathcal{J}^s_\phi(v) = \int_{\RRn} \int_{\RRn} \phi\left(\frac{\abs{v(x)-v(y)}}{\abs{x-y}^s}\right) \dxyn 
\end{align*}
Suppose that $u$ is $\phi$-harmonic on~$\Omega$ with boundary values~$u_D$ in the sense that
\begin{align*}
  u = \argmin_{v \in u_D + W^{s,\phi}_0(\Omega,\RRN)} \mathcal{J}^s_\phi(v).
\end{align*}

We define the closed convex hull of the complement values of $u$ by
\begin{align*}
	\cconvexhull u(\Omega^c)\coloneqq \bigcap \set{K\subset\RRN \text{ convex, closed}: u(x)\in K\text{ for almost all $x\in  \Omega^c$}}.
\end{align*}

\begin{lemma}\label{lem:convexhull_local}
  There holds
  \begin{align*}
    u(x) \in\cconvexhull u(\Omega^c) \quad\text{for almost all $x\in \Omega$}.
  \end{align*}
  As a consequence $\norm{u}_{L^\infty(\Omega,\RRN)} \leq
    \norm{u}_{L^\infty(\Omega^c,\RRN)}$. 
\end{lemma}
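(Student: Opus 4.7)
My plan is to mirror the proof of the local case (Lemma \ref{lem:convexhull_nonlocal}) exactly, replacing the gradient-level contraction \eqref{eq:PiKgradient} by the pointwise contraction \eqref{eq:PiKdifference} applied to the difference quotients that appear in $\mathcal{J}^s_\phi$.

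Set $K := \cconvexhull u(\Omega^c)$ and consider the competitor $\Pi_K u$, defined pointwise by $(\Pi_K u)(x) = \Pi_K(u(x))$. First I would check admissibility: by definition of $K$, $u(x) \in K$ for almost every $x \in \Omega^c$, so $\Pi_K u = u = u_D$ on $\Omega^c$, and hence $\Pi_K u \in u_D + W^{s,\phi}_0(\Omega,\RRN)$ provided we verify $\Pi_K u \in W^{s,\phi}(\RRn,\RRN)$. Both facts follow from the contraction estimate \eqref{eq:PiKdifference}, which yields
\begin{align*}
  \abs{(\Pi_K u)(x) - (\Pi_K u)(y)} \leq \abs{u(x) - u(y)} \quad \text{for all } x,y\in\RRn.
\end{align*}

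Next I would use this pointwise inequality together with monotonicity of the N-function $\phi$ to obtain, for every pair $x\neq y$,
\begin{align*}
  \phi\!\left(\frac{\abs{(\Pi_K u)(x) - (\Pi_K u)(y)}}{\abs{x-y}^s}\right) \leq \phi\!\left(\frac{\abs{u(x) - u(y)}}{\abs{x-y}^s}\right).
\end{align*}
Integrating against $\abs{x-y}^{-n}\,dx\,dy$ over $\RRn\times\RRn$ gives $\mathcal{J}^s_\phi(\Pi_K u) \leq \mathcal{J}^s_\phi(u)$. In particular, this also shows $\Pi_K u \in W^{s,\phi}(\RRn,\RRN)$, completing the admissibility check.

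Finally, since $\phi$ is strictly convex, $\mathcal{J}^s_\phi$ is strictly convex on the affine space of admissible functions, so its minimizer is unique. Combined with the energy inequality this forces $\Pi_K u = u$ almost everywhere, so $u(x) \in K$ for almost every $x \in \Omega$. The stated $L^\infty$ bound is then immediate since $K \subset \overline{B_{\norm{u}_{L^\infty(\Omega^c,\RRN)}}(0)}$. I do not anticipate a serious obstacle: the only step that required thought in the local case was that $\Pi_K$ reduces the Dirichlet energy via \eqref{eq:PiKgradient}, and here the analogous (and in fact more elementary) role is played by the pointwise contraction \eqref{eq:PiKdifference}, which is tailor-made for the nonlocal difference quotient form of $\mathcal{J}^s_\phi$.
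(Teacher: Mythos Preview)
Your proposal is correct and follows essentially the same argument as the paper: define $K=\cconvexhull u(\Omega^c)$, use the contraction estimate \eqref{eq:PiKdifference} to show $\mathcal{J}^s_\phi(\Pi_K u)\le \mathcal{J}^s_\phi(u)$, and conclude $u=\Pi_K u$ by uniqueness of the minimizer. The only difference is that you spell out the admissibility of $\Pi_K u$ and the passage from the pointwise contraction to the energy inequality, which the paper leaves implicit.
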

\begin{proof}
  Let $K:=	\cconvexhull u(\Omega^c)$ denote the closed convex hull of the boundary values. Then $\Pi_K u = u$ on~$\Omega^c$, so it has the same boundary values. Moreover, due to by~\eqref{eq:PiKdifference} it has smaller or equal energy, i.e., $ \mathcal{J}^s_\phi(\Pi_K u) \leq \mathcal{J}^s_\phi(u)$. Since the minimizer is unique, we have $u = \Pi_K u$. This proves the claim.
\end{proof}

\section{De Giorgi method}
\label{sec:de-giorgi-method}

In this section we apply the vectorial De Giorgi iteration technique to show local boundedness of solutions to local and nonlocal systems. Throughout the section we assume that $\phi$ is an N-function with lower  index $p>1$ and upper Simonenko index $q<\infty$.

A key ingredient of the proofs is the following iteration lemma which  can be found, e.g., in \cite[Lemma 7.1]{Giusti03}.

\begin{lemma}[Iteration Lemma]\label{lem:iteration}
	Let $a,b\geq 1$ and $\alpha >0$. Let $(W_k)$ be a sequence of non-negative real numbers such that
	\begin{align}\label{eq:iteration_lemma}
		W_{k}\leq a \,b^{k}W_{k-1}^{1+\alpha}\quad \text{for all $k\geq 1$}.
	\end{align}
	If we additionally have that
	\begin{align*}
		W_0 < a^{-\frac 1 \alpha}b^{-\frac 1 \alpha - \frac{1}{\alpha^2}},
	\end{align*}
	then $\lim _{k\rightarrow \infty} W_k =0$.
\end{lemma}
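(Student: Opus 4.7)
The plan is to reduce the recursion~\eqref{eq:iteration_lemma} to the clean form $Y_k \leq Y_{k-1}^{1+\alpha}$ via a suitable rescaling, and then iterate. The rescaling is dictated by the goal of making the prefactor in~\eqref{eq:iteration_lemma} independent of~$k$ and equal to~$1$: writing $W_k = c\, d^k Y_k$ transforms the recursion into
\begin{align*}
  Y_k \leq a\, c^\alpha\, b^k\, d^{k\alpha - 1 - \alpha}\, Y_{k-1}^{1+\alpha},
\end{align*}
so the choice $d \coloneqq b^{-1/\alpha}$ eliminates the $k$-dependent factor and the choice $c \coloneqq a^{-1/\alpha} b^{-1/\alpha - 1/\alpha^2}$ normalizes the resulting constant to~$1$. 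Note that $c, d \in (0,1]$ since $a, b \geq 1$.

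With these choices, the hypothesis on~$W_0$ translates precisely to $Y_0 < 1$, and the recursion simplifies to $Y_k \leq Y_{k-1}^{1+\alpha}$. A trivial induction then yields $Y_k \leq Y_0^{(1+\alpha)^k}$ for every $k$, and since $(1+\alpha)^k \to \infty$ with $Y_0 \in [0,1)$, we obtain $Y_k \to 0$. Using $d \leq 1$ finally gives $W_k = c\, d^k Y_k \leq c\, Y_k \to 0$, which is the desired conclusion.

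The only real delicacy is identifying the correct rescaling constants; once this bookkeeping is done the argument is essentially automatic, and the two exponents $\tfrac{1}{\alpha}$ and $\tfrac{1}{\alpha^2}$ appearing in the hypothesis arise naturally from matching powers of~$a$ and~$b$ in the substitution. It is worth noting that the strict inequality in the hypothesis is essential, as the borderline case $Y_0 = 1$ would permit the constant sequence $Y_k \equiv 1$, which satisfies the recursion without converging to~$0$.
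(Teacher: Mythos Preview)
Your proof is correct. The substitution $W_k = c\,d^k Y_k$ with $d = b^{-1/\alpha}$ and $c = a^{-1/\alpha} b^{-1/\alpha - 1/\alpha^2}$ is checked correctly: the exponent $(k-1)(1+\alpha)-k = k\alpha - 1 - \alpha$ is right, the $k$-dependent factor $b^k d^{k\alpha}$ collapses to~$1$, and the residual constant $a\,c^\alpha d^{-1-\alpha}$ equals~$1$ by construction. The induction $Y_k \leq Y_0^{(1+\alpha)^k}$ and the final bound $W_k \leq c\,Y_k$ via $d \leq 1$ are clean.

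There is, however, nothing to compare against: the paper does not give its own proof of this lemma but simply cites it as \cite[Lemma~7.1]{Giusti03}. Your argument is in fact the standard one behind that reference, so you have supplied what the paper outsourced.
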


\subsection{Local boundedness of solutions to local systems}
\label{sec:degiorgi-local-orlicz}
 
We start with the case of local systems, in particular we prove Theorem \ref{thm:orlicz_L_infty}.
As in Section~\ref{ssec:local-vect-probl} we assume that~$u$ is a local minimizer of~$\mathcal{J}_\phi$, i.e.
\begin{align}
  \label{eq:plap-scalar}
  \int A(\nabla u) : \nabla \psi\,dx &=0
\end{align}
for all $\psi \in W^{1,\phi}_0(\Omega, \RRN)$, where $A\,:\, \RRN \to \RR^{n \times N}$ is defined by
\begin{align}
  \label{eq:defA-local}
  A(Q) &\coloneqq \phi'(Q) \frac{Q}{\abs{Q}}.
\end{align}

We begin with a \Caccioppoli~type estimate for shortenings of $u$.
\begin{lemma}[\Caccioppoli~for shortenings]\label{lem:Cacc_local}
	Let $B_r$, $B_R$ be concentric balls with $r<R$ and $0<\lambda<\Lambda$. Let $u$ fulfill the assumptions of Theorem \ref{thm:orlicz_L_infty}. Then we have
	\begin{align*}
		\int_{B_r} \phi(\abs{\nabla S_\Lambda u}) \,dx\leq c \int_{B_R} \phi \left(\frac{\Lambda}{\Lambda-\lambda}\frac{\abs{S_\lambda u}}{R-r}\right) \, dx ,
	\end{align*}
	where $c>0$ depends only on the dimension $n$ and the Simonenko indices $p$ and $q$ of $\phi$.
\end{lemma}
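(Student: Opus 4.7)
The plan is to run a Caccioppoli-type argument, testing the Euler--Lagrange identity~\eqref{eq:plap-scalar} with
\begin{align*}
  \psi = \eta^q\, S_\Lambda u,
\end{align*}
where $\eta\in C^\infty_c(B_R)$ is a standard cut-off with $\eta\equiv 1$ on $B_r$, $0\le\eta\le 1$ and $\abs{\nabla\eta}\le 2/(R-r)$. Since $S_\Lambda$ is Lipschitz by~\eqref{eq:Slambda} and $\eta$ has compact support in $B_R\subset\Omega$, $\psi$ is admissible in~\eqref{eq:plap-scalar}. The product rule $\nabla\psi = \eta^q\nabla S_\Lambda u + q\eta^{q-1}\nabla\eta\otimes S_\Lambda u$ turns the equation into the balance
\begin{align*}
  \int\eta^q A(\nabla u):\nabla S_\Lambda u\,dx
  = -q\int\eta^{q-1}A(\nabla u):(\nabla\eta\otimes S_\Lambda u)\,dx.
\end{align*}

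For the left-hand side I would combine the identity~\eqref{eq:Slambdau-low} with the Simonenko bound $\phi'(t)t\ge p\phi(t)$ to obtain the intermediate lower bound $A(\nabla u):\nabla S_\Lambda u \ge p\phi(\abs{\nabla u})\,\abs{S_\Lambda u}/\abs u$, and then upgrade this using the explicit formulas~\eqref{eq:Slambdau-gradient-gradient-aux} and~\eqref{eq:Slambdau-gradient2}. Writing $A(\nabla u):\nabla S_\Lambda u = (\phi'(\abs{\nabla u})/\abs{\nabla u})\,\nabla u:\nabla S_\Lambda u$ and carefully comparing the two weighted averages of $\abs{\nabla u}^2$ and $\abs{\nabla\abs u}^2$ appearing in these formulas yields the pointwise lower bound $A(\nabla u):\nabla S_\Lambda u\ge c\,\phi(\abs{\nabla S_\Lambda u})$ on $\set{\abs u>\Lambda}$ with $c=c(p,q)>0$.

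For the right-hand side, the Cauchy--Schwarz estimate $\abs{A(\nabla u):(\nabla\eta\otimes S_\Lambda u)}\le\phi'(\abs{\nabla u})\abs{\nabla\eta}\abs{S_\Lambda u}$ is combined with a weighted Orlicz Young inequality. The key trick is to factor $\abs{S_\Lambda u}=\abs u\cdot(\abs{S_\Lambda u}/\abs u)$ with the weight $\abs{S_\Lambda u}/\abs u\le 1$, and then apply~\eqref{eq:young_with_phi2} to the pair $\bigl(\phi'(\abs{\nabla u}),\,\eta^{q-1}\abs{\nabla\eta}\abs u\bigr)$. This produces
\begin{align*}
  \eta^{q-1}\phi'(\abs{\nabla u})\abs{\nabla\eta}\abs{S_\Lambda u}
  \le \epsilon\,\eta^q\phi(\abs{\nabla u})\,\frac{\abs{S_\Lambda u}}{\abs u}
  + C_\epsilon\,\phi(\abs{\nabla\eta}\abs u),
\end{align*}
and the $\epsilon$-term absorbs into the left-hand side via the intermediate bound $A(\nabla u):\nabla S_\Lambda u\ge p\phi(\abs{\nabla u})(\abs{S_\Lambda u}/\abs u)$ obtained above.

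The remaining good term $\phi(\abs{\nabla\eta}\abs u)$ is controlled by the target quantity: on $\mathrm{supp}(S_\Lambda u)\subseteq\set{\abs u\ge\Lambda}$ the estimate~\eqref{eq:Slambdagamma} gives $\abs u\le\tfrac{\Lambda}{\Lambda-\lambda}\abs{S_\lambda u}$, and together with $\abs{\nabla\eta}\le 2/(R-r)$ and the $\Delta_2$-condition of $\phi$ this yields
\begin{align*}
  \phi(\abs{\nabla\eta}\abs u)
  \le c\,\phi\Bigl(\frac{\Lambda}{\Lambda-\lambda}\frac{\abs{S_\lambda u}}{R-r}\Bigr).
\end{align*}
Combining the two sides, using $\eta\equiv 1$ on $B_r$, and choosing $\epsilon$ sufficiently small completes the argument.

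The main obstacle is the pointwise comparison $A(\nabla u):\nabla S_\Lambda u\ge c\,\phi(\abs{\nabla S_\Lambda u})$: when $p\ge 2$ it follows readily from the convexity of $t\mapsto t^{p/2}$, but for $p<2$ the regime where $\abs u$ is close to $\Lambda$ and $\nabla u$ is dominantly tangential to level sets of $\abs u$ must be treated carefully, exploiting the Simonenko scaling of $\phi'$ and the monotonicity of $\phi(t)/t^p$ to balance the radial and tangential contributions to $\abs{\nabla S_\Lambda u}$ visible in~\eqref{eq:Slambdau-gradient2}.
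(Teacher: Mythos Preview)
Your argument has a genuine gap at the pointwise lower bound
\[
  A(\nabla u):\nabla S_\Lambda u \ge c\,\phi(\abs{\nabla S_\Lambda u})
\]
in the regime $p<2$: this inequality is simply false, and the ``balancing'' you sketch cannot repair it. Write $a=\abs{\nabla u}$, $b=\abs{\nabla\abs{u}}\le a$ and $\theta=\tfrac{\abs{u}-\Lambda}{\abs{u}}$. By~\eqref{eq:Slambdau-gradient-gradient-aux} and~\eqref{eq:Slambdau-gradient2},
\[
  A(\nabla u):\nabla S_\Lambda u=\frac{\phi'(a)}{a}\big[\theta a^2+(1-\theta)b^2\big],
  \qquad
  \abs{\nabla S_\Lambda u}^2=\theta^2 a^2+(1-\theta^2)b^2.
\]
Take $\theta=b^2/a^2$ with $b\ll a$: then both bracketed quantities are $\approx 2b^2$, so $\abs{\nabla S_\Lambda u}\approx b$, while the left side is $\approx 2\phi'(a)b^2/a$. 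For $\phi(t)=t^p$ the ratio is of order $(b/a)^{2-p}\to 0$. The same construction, realized on a set of positive measure by letting $\abs{u}$ hover just above~$\Lambda$ while $u$ rotates, defeats the integrated version $\int\eta^q\phi(\abs{\nabla u})\tfrac{\abs{S_\Lambda u}}{\abs u}\,dx\gtrsim\int_{B_r}\phi(\abs{\nabla S_\Lambda u})\,dx$ as well. The obstruction is structural: the weight $\tfrac{\abs{S_\Lambda u}}{\abs u}$ vanishes at the boundary $\set{\abs u=\Lambda}$, whereas $\nabla S_\Lambda u$ does not (its radial part survives).

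The paper avoids this by testing with $S_\gamma u$ for the \emph{intermediate} level $\gamma=\tfrac12(\lambda+\Lambda)$ rather than with $S_\Lambda u$. The weight $\tfrac{\abs{S_\gamma u}}{\abs u}$ is then bounded \emph{below} by $\tfrac{\Lambda-\lambda}{2\Lambda}$ on all of $\set{\abs u>\Lambda}$, so the trivial bound $\abs{\nabla S_\Lambda u}\le\abs{\nabla u}$ already yields
\[
  \int\eta^q\phi(\abs{\nabla u})\frac{\abs{S_\gamma u}}{\abs u}\,dx
  \ge \frac{\Lambda-\lambda}{2\Lambda}\int_{B_r}\phi(\abs{\nabla S_\Lambda u})\,dx.
\]
A matching factor $\tfrac{\Lambda-\lambda}{\Lambda}$ appears on the right-hand side via $\abs u\le\tfrac{2\Lambda}{\Lambda-\lambda}\abs{S_\lambda u}$ on $\set{\abs u>\gamma}$, and the two cancel. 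Once you insert this intermediate level, the rest of your outline (the Young absorption and the use of~\eqref{eq:Slambdagamma}) goes through and coincides with the paper's proof.
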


\begin{proof}
	Let $\gamma := \frac 12 (\lambda + \Lambda)$. Let $\eta$ be a smooth cut-off function with 
	\begin{align}\label{eq:cut_off_lambda_Lambda}
		\indicator_{B_r}\leq \eta \leq \indicator_{B_R}\quad\text{and}\quad \abs{\nabla \eta}\leq \frac{c}{R-r}.
 	\end{align}
 	We use $\eta^q S_\gamma u$ as a test function to arrive at
 	\begin{align*}
 		0&= \int \phi'(\abs{\nabla u}) \frac{\nabla u}{\abs{\nabla u}}:\left(\eta^q \nabla S_\gamma u + q\eta^{q-1}\nabla \eta \otimes S_\gamma u \right)\,dx
 		\\
 		&\eqqcolon \mathrm{I}+\mathrm{II}.
 	\end{align*}
 	Using \eqref{eq:Slambdau-low} and $\phi^\prime(t)t\geq \phi(t)$ we have
 	\begin{align*}
 		\mathrm{I}&\geq \int \eta ^q \phi'(\abs{\nabla u}) \frac{\abs{S_\gamma u}}{\abs{u}}\abs{\nabla u}\,dx \geq  \int \eta ^q \phi(\abs{\nabla u}) \frac{\abs{S_\gamma u}}{\abs{u}}\,dx .
 	\end{align*}
 	By \eqref{eq:cut_off_lambda_Lambda}, Young' inequality and $\phi^\ast(\phi'(t))\eqsim \phi(t)$ we have for every $\delta >0$
 	\begin{align*}
 		\abs{\mathrm{II}} &\lesssim \int \frac{\abs{S_\gamma u}}{\abs{u}}\left( \eta^{q-1} \phi' (\abs{\nabla u}) \frac{\abs{u}}{R-r}\right)\,dx
 		\\
 		&\lesssim \delta \int \eta^q \phi(\abs{\nabla u})\frac{\abs{S_\gamma u}}{\abs{u}} \,dx + c_\delta \int_{B_R} \phi \left(\frac{\abs{u}}{R-r}\right) \frac{\abs{S_\gamma u}}{\abs{u}}\, dx .
 	\end{align*}
 	If we choose $\delta>0$ small enough, then we can absorb the first term into $\mathrm{I}$. Thus, we can summarize our findings so far as
 	\begin{align}\label{eq:local_orlicz_tested}
 		\int \eta^q \phi(\abs{\nabla u})\frac{\abs{S_\gamma u}}{\abs{u}} \,dx \lesssim \int_{B_R} \phi \left(\frac{\abs{u}}{R-r}\right) \frac{\abs{S_\gamma u}}{\abs{u}}\, dx.
 	\end{align}
 	Note that
 	\begin{align}\label{eq:orlicz_aux1}
 		\indicator_{\set{\abs{v}>\Lambda}}\abs{v} = \frac{\abs{v}}{\abs{v}-\gamma} \abs{S_\gamma v} \leq \frac{\Lambda}{\Lambda -\gamma}\abs{S_\gamma v}=  \frac{2\Lambda}{\Lambda -\lambda}\abs{S_\gamma v},
 	\end{align}
 	and
 	\begin{align}\label{eq:orlicz_aux2}
 		\indicator_{\set{\abs{v}>\gamma}}\abs{v} = \frac{\abs{v}}{\abs{v}-\lambda} \abs{S_\lambda v} \leq \frac{\gamma}{\gamma -\lambda}\abs{S_\lambda v}\leq  \frac{2\Lambda}{\Lambda -\lambda}\abs{S_\lambda v}.
 	\end{align}
 	Using \eqref{eq:orlicz_aux1}, we can estimate the left hand side of \eqref{eq:local_orlicz_tested} from below:
 	\begin{align*}
 		\int \eta^q \phi(\abs{\nabla u})\frac{\abs{S_\gamma u}}{\abs{u}} \,dx &\geq \frac{\Lambda-\lambda}{2\Lambda} \int \eta^q \phi(\abs{\nabla u}) \indicator_{\set{\abs{u}>\Lambda}}\,dx
 		\\
 		&\geq \frac{\Lambda-\lambda}{2\Lambda} \int_{B_r}  \phi(\abs{\nabla S_\Lambda u}) \,dx,
 	\end{align*}
 	where in the last step we used \eqref{eq:Slambdau-gradient-ineq2} and the fact that $\nabla S_\Lambda u$ vanishes on $\set{\abs{u}\leq \Lambda}$.
 	
 	On the other hand we can estimate the right hand side of \eqref{eq:local_orlicz_tested} with \eqref{eq:orlicz_aux2}:
 	
 	\begin{align*}
 		\int_{B_R} \phi \left(\frac{\abs{u}}{R-r}\right) \frac{\abs{S_\gamma u}}{\abs{u}}\, dx&\leq \int_{B_R} \phi' \left(\frac{\abs{u}}{R-r}\right)\frac{\abs{S_\gamma u}}{R-r} \, dx
 		\\
 		&\leq \int_{B_R} \phi' \left(\frac{1}{R-r}\frac{\Lambda}{\Lambda-\lambda}\abs{S_\lambda u}\right)\frac{\abs{S_\gamma u}}{R-r} \, dx 
 		\\
 		&\leq \int_{B_R} \phi \left(\frac{1}{R-r}\frac{\Lambda}{\Lambda-\lambda} \abs{S_\lambda u}\right)\frac{\Lambda-\lambda}{\Lambda} \,dx.
 	\end{align*}
 	Overall, we have shown
 	\begin{align*}
 		 \int_{B_r} \phi(\abs{\nabla S_\Lambda u}) \,dx\lesssim \int_{B_R} \phi \left(\frac{\Lambda}{\Lambda-\lambda}\frac{\abs{S_\lambda u}}{R-r}\right) \, dx .
 	\end{align*}
 	This finishes the proof.
 \end{proof}

We are now ready to proof our main theorem in the local case.

\begin{proof}[Proof of Theorem \ref{thm:orlicz_L_infty}]
 	We now proceed by applying Lemma \eqref{lem:Cacc_local} to certain sequences $B_k$ of decreasing concentric balls and $\lambda_k$ of increasing scalar thresholds. 
 	
 	For that let $B_k \coloneqq (1+2^{-k})B$ and $\lambda_k \coloneqq (1-2^{-k})\lambda_\infty$, where $\lambda_\infty >0$ is some fixed number we choose later. We set
 	\begin{align*}
 		U_k \coloneqq \fint_{B_k} \phi \left( \frac{\abs{S_{\lambda_k}u}}{r}\right)\,dx.
 	\end{align*}
 	This sequence is almost decreasing in the sense that for all $k\geq 1$ 
 	\begin{align*}
 		U_k \leq \frac{\abs{B_{k-1}}}{\abs{B_{k}}} U_{k-1}\leq 2^n \,U_{k-1}.
 	\end{align*}
 	Note that
 	\begin{align}\label{eq:lambdak}
 			\frac{\lambda_{k}}{\lambda_{k}-\lambda_{k-1}} &= \frac{1-2^{-k}}{(1-2^{-k})-(1-2^{-k+1})}=2^k-1\leq 2^k 
 	\end{align}
 	and
 	\begin{align}\label{eq:r_k}
 		r_{k-1}-r_k =2^{-k}r.
 	\end{align}
 	Combining this with Lemma \eqref{lem:Cacc_local}, we get for all $k\geq 1$
 	\begin{align}\label{eq:orlicz_local_upper}
 		\fint_{B_k} \phi(\abs{\nabla S_{\lambda_k} u}) \,dx\lesssim  2^{2qk} U_{k-1} .
 	\end{align}
 	On the other hand, we can use Hölder's inequality as well as Lemma \ref{lem:Sob_Ponc_Orlicz} to get
 	\begin{align*}
 		U_k &\leq \left(\fint_{B_k} \phi^{\frac{n}{n-1}}\left(\frac{\abs{S_{\lambda_k}u}}{r}\right)\,dx\right)^{\frac{n-1}{n}} \left(\frac{\abs{\set{S_{\lambda_k}u\neq 0}\cap B_k}}{\abs{B_k}}\right)^{\frac 1n}
 		\\
 		&\lesssim \bigg[\bigg(\fint_{B_k} \phi^{\frac{n}{n-1}}\left(\frac{\abs{S_{\lambda_k}u-\mean{S_{\lambda_k u}}_{B_k}}}{r}\right)\,dx\bigg)^{\frac{n-1}{n}}\!\!\!+\phi(r^{-1}\abs{\mean{S_{\lambda_k}u}_{B_k}})\bigg]
 		\\
 		&\quad\times\left(\frac{\abs{\set{S_{\lambda_k}u\neq 0}\cap B_k}}{\abs{B_k}}\right)^{\frac 1n}
 		\\
 		&\lesssim \bigg(\fint_{B_k} \phi \left(\abs{\nabla S_{\lambda_k}u}\right) \,dx+U_k\bigg) ~ \left(\frac{\abs{\set{S_{\lambda_k}u\neq 0}\cap B_k}}{\abs{B_k}}\right)^{\frac 1n}.
 	\end{align*}
 	Using $U_k \leq 2 U_{k-1}$ and  \eqref{eq:orlicz_local_upper} we have shown
 	\begin{align}\label{eq:orlicz_local_lower}
 		U_k \lesssim 2^{2qk}U _{k-1} ~ \left(r^{-n}\abs{\set{S_{\lambda_k}u\neq 0}\cap B_k}\right)^{\frac 1n}.
 	\end{align}
 	Note that
 	\begin{align*}
 		r^{-n}\bigabs{\set{S_{\lambda_k}u\neq 0}\cap B_k}&=r^{-n}\bigabs{\set{r^{-1}\abs{S_{\lambda_{k-1}}u}>r^{-1}2^{-k}\lambda_\infty}\cap B_k}
 		\\
 		&\lesssim \frac{1}{\phi(r^{-1}2^{-k}\lambda_\infty)}\fint_{B_k} \phi(\abs{r^{-1}S_{\lambda_{k-1}}u})\,dx
 		\\
 		&\lesssim \frac{1}{\phi(r^{-1}2^{-k}\lambda_\infty)} U_{k-1}
 		\\
 		&\leq \frac{2^{kq}}{\phi(r^{-1}\lambda_\infty) } U_{k-1}. 
 	\end{align*}
 	Combining this with \eqref{eq:orlicz_local_lower} we conclude
 	\begin{align}\label{eq:orlicz_local_iteration}
 		\frac{U_k}{\phi(r^{-1}\lambda_\infty)}\lesssim 2^{k(2q+\tfrac qn)} \left(\frac{U_{k-1}}{\phi(r^{-1}\lambda_\infty)}\right)^{1+\tfrac 1n}.
 	\end{align}
 	We can apply the iteration Lemma \ref{lem:iteration} to \eqref{eq:orlicz_local_iteration}. This gives us that there exists some constant $\delta>0$, depending only on $p,q$ and $n$ such that $U_0 < \delta \phi(r^{-1}\lambda_\infty)$ implies that $U_k \rightarrow 0 $ as $k\rightarrow \infty$. Thus, choosing $\lambda_\infty$ such that $U_0=2\delta \phi(r^{-1}\lambda_\infty)$, we conclude that $u \in L^{\infty}(B)$ with 
 	\begin{align*}
 		\sup_B \phi(r^{-1}\abs{u}) =\phi\big(\sup_B r^{-1} \abs{u}\big) \leq \phi(r^{-1}\lambda_\infty) = 2 \delta^{-1} U_0 = 2\delta^{-1}\fint_{2B} \phi(r^{-1}\abs{u} )\,dx .
 	\end{align*} 
 	This finishes the proof.
\end{proof}

\subsection{Local boundedness of solutions of nonlocal systems}
\label{sec:degiorgi-non-local-case}

In this section we prove local boundedness for solutions of nonlocal nonlinear equations. 

In the case of nonlocal systems, terms of the form $v(x)-v(y)$ appear frequently. Because of this it is convenient to use the following notation: Let $v$ be a (scalar- or vector-valued) function, $x,y\in \RRn$ and $s\in (0,1)$. We define the difference operator~$\delta_{x,y}$ and the symmetrization operator~$\sigma_{x,y}$ by
\begin{align*}
	\delta_{x,y} v\coloneqq v(x)-v(y), \qquad \delta_{x,y}^s v \coloneqq \frac{v(x)-v(y)}{\abs{x-y}^s}, \quad \text{and} \quad \sigma_{x,y}v \coloneqq \tfrac 12 \big(v(x)+v(y)\big).
\end{align*}
We have the product rule
\begin{align}\label{eq:product_rule}
	\delta_{x,y} (fg)= \delta_{x,y}f \sigma_{x,y} g + \sigma_{x,y} f \delta_{x,y} g .
\end{align}
Furthermore, we have
\begin{align}\label{eq:sigma_rule}
	\sigma_{x,y}(\abs{fg})\leq 2 \sigma_{x,y} \abs{f} \sigma_{x,y} \abs{g}.
\end{align}

As in Section~\ref{ssec:nonl-vect-probl} we assume that $u$ is local minimizer of~$\mathcal{J}^s_\phi$. In particular, $u$ solves
\begin{align}\label{eq:weak_equation_NL}
	\int_{\RRn} \int_{\RRn}  A(\delta_{x,y}^s u)\cdot \delta_{x,y}^s \psi  \dxyn =0,
\end{align}
for all $\psi \in W_0^{s,p}(\Omega)$, where $A\,:\, \RRN \to \RRN$ is defined by
\begin{align}
  \label{eq:defA-nonlocal}  A(a) &= \phi'(\abs{a})\frac{a}{\abs{a}}.
\end{align}
Note that in the nonlocal case $A$ maps to~$\RRN$, while in the local case~\eqref{eq:defA-local} it maps to matrices. For all $P,Q\in \RRN$ we have
\begin{align}\label{eq:A(P)-A(Q)}
	(A(P)-A(Q))\cdot(P-Q)\geq 0.
\end{align}

For a ball $B\subset \RRn$ with radius $r>0$ and center $x_B$ and $u$ we define the \emph{tail term} $\tail(u,B)$ via
\begin{align}\label{eq:tailterm}
	\tail(u,B)\coloneqq r^s \left(\phi'\right)^{-1}\left(r^s \int_{B^c}\phi'\left(\frac{\abs{u(y)}}{\abs{y-x_B}^s}\right) \frac{dy}{\abs{y-x_B}^{n+s}}\right).
\end{align}
This definition, which is also used in \cite[(3.4)]{ChakerKimWeidner2022}, is in accordance with the one used for the fractional $p$-Laplace equation (see e.g. \cite[(1.5)]{DiCastroKuusiPalatucci2016}). In \cite[(2.11)]{ByunKimOk2023} a slightly different tail term is introduced, omitting the use of~$(\phi')^{-1}$.

Note that the equation~\eqref{eq:weak_equation_NL} is invariant on the transformation $\bar{u}(x) = t^{-s} u(t x)$, i.e., if $u$ is a local minimizer on~$B$, then so is $\bar{u}$ on~$t^{-1} B$.  The expression $r^{-s} \tail(u,B)$ is also invariant under this transformation.

We now begin to work towards the proof of our main result in the nonlocal case, Theorem \ref{thm:local_boundedness_nonlocal}. The following Lemma is the nonlocal counterpart of Lemma \ref{lem:Cacc_local}.

\begin{lemma}[Nonlocal \Caccioppoli~for shortenings]\label{lem:nonlocal_Cacc}
	Let $0<\lambda <\Lambda$, let $B_r$ and $B_R$ be concentric balls in $\RRn$ with $r<R$ and let $\phi$ be an N-function with Simonenko indices $p$ and $q$, where $1<p\leq q<\infty$. Let $u\in W^{s,\phi}(\RRn,\RRN)$ be locally $\mathcal{J}^s_\phi$-harmonic in $B_R$. Then we have
	\begin{align*}
		\begin{aligned}
			\lefteqn{\int_{B_r}\int_{B_r}\phi(\abs{\delta_{x,y}^s S_\Lambda u}) \frac{dx\,dy}{\abs{x-y}^n}
			\lesssim \int_{B_R} \phi \left( \frac{\Lambda}{\Lambda-\lambda}\frac{R}{R-r}\frac{\abs{S_\lambda u}}{R^s}\right)  \,dx}\qquad\qquad&
			\\
			&+\frac{\Lambda}{\Lambda-\lambda}\left(\frac{R}{R-r}\right)^{n+sq} \frac{\phi' \left(R^{-s}\tail\left(u,B_R\right)\right)}{\phi'(\frac{\Lambda-\lambda}{\Lambda}\frac{\Lambda}{R^s})}\int_{B_R} \phi\left(\frac{\abs{S_\lambda u}}{R^s}\right)\,dx.
		\end{aligned}
	\end{align*}
	where $c>0$ depends only on $p$, $q$, $s$ and $n$.
\end{lemma}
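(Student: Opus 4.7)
The plan is to mimic the local Caccioppoli proof of Lemma~\ref{lem:Cacc_local}. Set $\gamma\coloneqq\tfrac12(\lambda+\Lambda)$ and pick a smooth cutoff $\eta$ with $\indicator_{B_r}\leq\eta\leq\indicator_{B_{(R+r)/2}}$ and $\abs{\nabla\eta}\lesssim (R-r)^{-1}$. Testing the weak equation~\eqref{eq:weak_equation_NL} with $\psi=\eta^q S_\gamma u\in W^{s,\phi}_c(B_R,\RRN)$ and decomposing $\RRn\times\RRn=(B_R\times B_R)\cup 2(B_R\times B_R^c)\cup(B_R^c\times B_R^c)$, the last piece vanishes since $\psi\equiv 0$ on $B_R^c$, and the weak equation reduces to $0=\mathcal{I}+2\mathcal{J}$, where $\mathcal{I}$ is the local double integral over $B_R\times B_R$ and $\mathcal{J}$ the cross integral over $B_R\times B_R^c$.

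For $\mathcal{I}$, apply the product rule~\eqref{eq:product_rule} to split
\begin{align*}
\delta^s_{x,y}\psi=\sigma_{x,y}(\eta^q)\,\delta^s_{x,y}(S_\gamma u)+\delta^s_{x,y}(\eta^q)\,\sigma_{x,y}(S_\gamma u),
\end{align*}
giving $\mathcal{I}=\mathcal{I}_1+\mathcal{I}_2$. The vectorial monotonicity of Lemma~\ref{lem:Slx-Sly-x-y} together with $\phi'(t)t\geq\phi(t)$ yields
\begin{align*}
A(\delta^s_{x,y}u)\cdot\delta^s_{x,y}(S_\gamma u)\geq\tfrac12\Bigl(\tfrac{\abs{S_\gamma u(x)}}{\abs{u(x)}}+\tfrac{\abs{S_\gamma u(y)}}{\abs{u(y)}}\Bigr)\phi(\abs{\delta^s_{x,y}u}).
\end{align*}
By~\eqref{eq:orlicz_aux1}, on $\{\abs{u}>\Lambda\}$ one has $\abs{S_\gamma u}/\abs{u}\geq (\Lambda-\lambda)/(2\Lambda)$; a case analysis on whether $\abs{u(x)},\abs{u(y)}$ lie above or below $\Lambda$ (using that $S_\Lambda u$ vanishes on $\{\abs{u}\leq\Lambda\}$ and $\abs{\delta^s_{x,y}S_\Lambda u}\leq\abs{\delta^s_{x,y}u}$ by contractivity~\eqref{eq:Slambda}) yields
\begin{align*}
\mathcal{I}_1\gtrsim\frac{\Lambda-\lambda}{\Lambda}\int_{B_r}\!\int_{B_r}\phi\bigl(\abs{\delta^s_{x,y}S_\Lambda u}\bigr)\frac{dx\,dy}{\abs{x-y}^n}.
\end{align*}
For $\mathcal{I}_2$ we use $\abs{\delta^s_{x,y}\eta^q}\lesssim\sigma_{x,y}(\eta^{q-1})R^{1-s}/(R-r)$ on $B_R\times B_R$ together with a weighted form of Young's inequality~\eqref{eq:young_with_phi2}, arranging the $\varepsilon$-term so that it inherits the weight $\sigma_{x,y}(\abs{S_\gamma u}/\abs{u})$ and can be absorbed into $\mathcal{I}_1$. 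Substituting $\abs{S_\gamma u}\leq\frac{\Lambda}{\Lambda-\lambda}\abs{S_\lambda u}$ from~\eqref{eq:orlicz_aux2} produces the first summand on the right-hand side of the claim.

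It remains to treat $\mathcal{J}$. Since $\psi(y)=0$ for $y\in B_R^c$,
\begin{align*}
\abs{\mathcal{J}}\leq\int_{B_R}\eta^q(x)\abs{S_\gamma u(x)}\int_{B_R^c}\phi'(\abs{\delta^s_{x,y}u})\frac{dy}{\abs{x-y}^{n+s}}\,dx.
\end{align*}
We bound $\phi'(\abs{\delta^s_{x,y}u})\lesssim \phi'(\abs{u(y)}/\abs{x-y}^s)+\phi'(\abs{u(x)}/\abs{x-y}^s)$. For $x\in\mathrm{supp}\,\eta$ and $y\in B_R^c$ the geometric comparison $\abs{x-y}\eqsim\abs{y-x_B}$ up to a factor $R/(R-r)$ turns the $\abs{u(y)}$-piece, by the very definition of $\tail(u,B_R)$, into a term of order $(R/(R-r))^{n+s}R^{-s}\phi'(R^{-s}\tail(u,B_R))$, whereas the $\abs{u(x)}$-piece is local and controlled via $\abs{u(x)}\lesssim\frac{\Lambda}{\Lambda-\lambda}\abs{S_\lambda u(x)}$ on $\mathrm{supp}\,S_\gamma u$. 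A final Young's inequality applied at the scale $\tfrac{\Lambda-\lambda}{\Lambda}\cdot\tfrac{\Lambda}{R^s}$ converts the outer product $\abs{S_\gamma u(x)}\phi'(R^{-s}\tail(u,B_R))$ into the characteristic ratio $\phi'(R^{-s}\tail(u,B_R))/\phi'\bigl(\tfrac{\Lambda-\lambda}{\Lambda}\cdot\tfrac{\Lambda}{R^s}\bigr)$ multiplied by $\int_{B_R}\phi(\abs{S_\lambda u}/R^s)\,dx$, with the stated geometric prefactor $(R/(R-r))^{n+sq}$.

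The main technical obstacle is precisely this final calibration: the Young scale must be chosen so that the $\phi$-factor produced is exactly $\int_{B_R}\phi(\abs{S_\lambda u}/R^s)\,dx$ (since only then can the Cacciophic estimate feed the De Giorgi iteration in the proof of Theorem~\ref{thm:local_boundedness_nonlocal}), while the $\phi^*$-dual factor, using $(\phi^*)'(\phi'(t))=t$, yields exactly the ratio in the statement. Tracking the $\Lambda,\lambda$- and $R,r$-dependence sharply across the weighted Young applications, and bundling the cutoff scale $R^{1-s}/(R-r)$ with the $B_R^c$-geometric factors into the single prefactor $(R/(R-r))^{n+sq}$, is where the bookkeeping is heaviest.
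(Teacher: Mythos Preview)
Your outline follows the paper's strategy closely (test with $\eta^q S_\gamma u$, split into local and nonlocal pieces, absorb via Young), but two steps diverge from the paper and one of them does not work as you describe.

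\textbf{The cross term $\mathcal{J}$.} You bound $\phi'(\abs{\delta^s_{x,y}u})\lesssim \phi'(\abs{u(y)}/\abs{x-y}^s)+\phi'(\abs{u(x)}/\abs{x-y}^s)$ and treat the two pieces separately. This works, but the paper avoids the $u(x)$-piece entirely by exploiting the vectorial monotonicity~\eqref{eq:A(P)-A(Q)}: writing $\psi(x)=\eta^q(x)\tfrac{\abs{S_\gamma u(x)}}{\abs{u(x)}}u(x)$ as a nonnegative scalar multiple of $u(x)$, one has
\[
A(\delta^s_{x,y}u)\cdot\frac{u(x)}{\abs{x-y}^s}
=\Big(A(\delta^s_{x,y}u)-A\big(\tfrac{-u(y)}{\abs{x-y}^s}\big)\Big)\cdot\Big(\delta^s_{x,y}u-\tfrac{-u(y)}{\abs{x-y}^s}\Big)
+A\big(\tfrac{-u(y)}{\abs{x-y}^s}\big)\cdot\frac{u(x)}{\abs{x-y}^s}
\geq A\big(\tfrac{-u(y)}{\abs{x-y}^s}\big)\cdot\frac{u(x)}{\abs{x-y}^s},
\]
so only the tail contribution survives. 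Your splitting produces an extra local term which can indeed be absorbed into the first right-hand side summand, but the monotonicity route is shorter.

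\textbf{The final calibration is not Young's inequality.} This is the real gap. Young applied to $\phi'(R^{-s}\tail)\cdot\tfrac{\abs{S_\gamma u}}{R^s}$ yields an \emph{additive} term of order $\phi^*\big(\phi'(R^{-s}\tail)\big)\eqsim\phi(R^{-s}\tail)$, not the multiplicative ratio in the statement; with an additive tail term the De~Giorgi iteration in Theorem~\ref{thm:local_boundedness_nonlocal} would break. The paper instead uses a level-set argument: on $\set{S_\gamma u\neq 0}=\set{\abs{u}>\gamma}$ one has $\abs{S_\lambda u}/R^s>(\gamma-\lambda)/R^s=\tfrac{\Lambda-\lambda}{2R^s}$, hence $1\leq \phi'(\abs{S_\lambda u}/R^s)/\phi'\!\big(\tfrac{\Lambda-\lambda}{2R^s}\big)$ there, and therefore
\[
\frac{\abs{S_\gamma u}}{R^s}\leq\frac{\abs{S_\lambda u}}{R^s}\cdot\frac{\phi'(\abs{S_\lambda u}/R^s)}{\phi'((\Lambda-\lambda)/(2R^s))}
\lesssim\frac{\phi(\abs{S_\lambda u}/R^s)}{\phi'((\Lambda-\lambda)/R^s)}.
\]
Multiplying by $\phi'(R^{-s}\tail)$ gives exactly the claimed ratio. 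A minor additional point: in your bound for $\mathcal{I}_2$ you must retain the factor $\abs{x-y}^{1-s}$ in $\abs{\delta^s_{x,y}\eta}\lesssim\abs{x-y}^{1-s}/(R-r)$ rather than replace it by $R^{1-s}$, since otherwise the resulting $\int_{B_R}\abs{x-y}^{-n}\,dy$ diverges; the paper keeps it and uses convexity of $\phi$ together with $\int_{B_R}\abs{x-y}^{-(n+s-1)}\,dy\lesssim R^{1-s}$.
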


\begin{proof}
	Define $\gamma\coloneqq \frac 12 (\lambda + \Lambda)$ and let $\eta$ be a smooth cut-off function with
	\begin{align*}
		\indicator _{B_{\frac{2r+R}{3}}}\leq \eta \leq \indicator_{B_{\frac{r+2R}{3}}} \qquad \text{and} \qquad \abs{\nabla \eta} \leq \frac{c}{R-r}.
	\end{align*}
	Recall that for all $\psi \in W^{s,p}_0 (B_R)$
	\begin{align*}
		\int \int A(\delta_{x,y}^s u)\cdot \delta_{x,y} ^s \psi \dxyn =0 .
	\end{align*}
	Using $\psi =0$ on $(B_R)^c$ and symmetry in the variables $x$ and $y$ yields
	\begin{align*}
		\int_{B_R}\int_{B_R} A(\delta_{x,y}^s u)\cdot\delta_{x,y}^s \psi \dxyn = -2 \int_{(B_R)^c}\int_{B_R}  A(\delta_{x,y}^s u)\cdot \delta_{x,y}^s \psi \dxyn .
	\end{align*}
	We apply this with $\psi =\eta^q S_\gamma u$ and use the product rule \eqref{eq:product_rule} to get
	\begin{align*}
			\mathrm{I}&\coloneqq \int_{B_R}\int_{B_R} \sigma_{x,y} (\eta^q)A(\delta_{x,y}^s u)\cdot \delta_{x,y}^s S_\gamma u \dxyn
			\\
			&= -\! \int_{B_R}\int_{B_R}\delta_{x,y}^s (\eta^q) A(\delta_{x,y}^s u)\cdot  \sigma_{x,y}S_\gamma u \dxyn - 2\!\!\int_{(B_R)^c}\int_{B_R}  A(\delta_{x,y}^s u)\cdot \delta_{x,y}^s \psi \dxyn
			\\
			&\coloneqq \mathrm{II}+\mathrm{III} .
	\end{align*}
	From Lemma \ref{lem:Slx-Sly-x-y} we know
	\begin{align}\label{eq:Slambda_good_notation}
		\delta_{x,y}S_\gamma u \cdot \delta_{x,y}u \geq  \sigma_{x,y} \bigg(\frac{\abs{S_\gamma u}}{\abs{u}}\bigg)\abs{\delta_{x,y} u}^2.
	\end{align}
	Thus, we can estimate $\mathrm{I}$ from below as follows.
	\begin{align*}
		I&=\int_{B_R}\int_{B_R} \frac{\phi'(\abs{\delta_{x,y}^su})}{\abs{\delta_{x,y}^s u}}\sigma_{x,y} (\eta^q) \frac{\delta_{x,y} S_\gamma u\cdot \delta_{x,y} u}{\abs{x-y}^{2s}} \dxyn
		\\
		&\geq \int_{B_R}\int_{B_R} \phi'(\abs{\delta_{x,y}^su})\abs{\delta_{x,y}^s u}\sigma_{x,y} (\eta^q) \sigma_{x,y} \bigg(\frac{\abs{S_\gamma u}}{\abs{u}}\bigg) \dxyn
		\\
		&\geq \int_{B_R}\int_{B_R} \phi(\abs{\delta_{x,y}^su})(\sigma_{x,y} \eta)^q \sigma_{x,y} \bigg(\frac{\abs{S_\gamma u}}{\abs{u}}\bigg) \dxyn .
	\end{align*}
	We can now use \eqref{eq:orlicz_aux1} like in the local case. This gives us
	\begin{align}
		\begin{aligned}
			\mathrm{I}&\geq \frac{\Lambda-\lambda}{2\Lambda} \int_{B_R}\int_{B_R} \phi(\abs{\delta_{x,y}^su})(\sigma_{x,y} \eta)^q \sigma_{x,y}\big(\indicator_{\set{\abs{u}>\Lambda}}\big) \dxyn.
		\end{aligned}
	\end{align}
	Using that $S_\Lambda$ is a contraction and that $\sigma_{x,y}\left(\indicator_{\set{\abs{u}>\Lambda}}\right)=0$ only holds when we also have $\delta_{x,y}^s S_\Lambda u=0$, we get
	\begin{align}\label{eq:nonlocal_aux1}
		\mathrm{I}\gtrsim \frac{\Lambda-\lambda}{\Lambda} \int_{B_R}\int_{B_R} \phi(\abs{\delta_{x,y}^s S_\Lambda u})(\sigma_{x,y} \eta)^q \dxyn.
	\end{align}
	
	Next, we estimate $\mathrm{II}$. We notice that
	\begin{align*}
		\abs{\delta^s_{x,y} \eta^q} \lesssim \abs{\delta^s_{x,y}\eta}(\sigma_{x,y}\eta)^{q-1}.
	\end{align*}
	This, together with \eqref{eq:sigma_rule} gives
	\begin{align*}
		\abs{\mathrm{II}}&\lesssim \int_{B_R}\int_{B_R} \phi'(\abs{\delta_{x,y}^s u}) (\sigma_{x,y}\eta)^{q-1}\abs{\delta_{x,y}^s \eta} \abs{\sigma_{x,y} S_\gamma u}\dxyn
		\\
		&\leq 2 \int_{B_R}\int_{B_R} \sigma_{x,y}\bigg(\frac{\abs{S_\gamma u}}{\abs{u}}\bigg) \phi'(\abs{\delta_{x,y}^s u})(\sigma_{x,y}\eta)^{q-1}\sigma_{x,y}\abs{u}\,\abs{\delta_{x,y}^s \eta} \dxyn.
	\end{align*}
	We now use Young's inequality \eqref{eq:young_with_phi2}, \eqref{eq:simonenko_grwoth} and \eqref{eq:orlicz1}. This gives for every $\epsilon >0$ that
	\begin{align*}
		\mathrm{II}&\lesssim \int_{B_R}\int_{B_R} \sigma_{x,y}\bigg(\frac{\abs{S_\gamma u}}{\abs{u}}\bigg) \left[\epsilon\, (\sigma_{x,y}\eta)^q\phi(\abs{\delta_{x,y}^su})+C_\epsilon\phi\left(\sigma_{x,y}\abs{u}\,\abs{\delta_{x,y}^s \eta}\right)\right] \dxyn 
		\\
		&\lesssim \epsilon\,\mathrm{I} + C_\epsilon \int_{B_R}\int_{B_R} \sigma_{x,y}\left(\frac{\abs{S_\gamma u}}{\abs{u}}\right) \phi\left(\sigma_{x,y}\abs{u}\,\abs{\delta_{x,y}^s \eta}\right)\dxyn .
	\end{align*}
	Choosing $\epsilon$ small enough we have
	\begin{align}\label{eq:nonlocal_aux2}
		\begin{aligned}
			\abs{\mathrm{II}}&\leq \tfrac 12 \, \mathrm{I}+ C \int_{B_R}\int_{B_R} \sigma_{x,y}\left(\frac{\abs{S_\gamma u}}{\abs{u}}\right) \phi\left(\sigma_{x,y}\abs{u}\,\abs{\delta_{x,y}^s \eta}\right)\dxyn 
			\\
			&\coloneqq \tfrac 12 \,\mathrm{I} + C\, \widetilde{\mathrm{II}}.
		\end{aligned}
	\end{align}
	Our next step is to desymmetrize $\widetilde{\mathrm{II}}$. For this we note that
	\begin{align*}
		\frac{\abs{S_\gamma u(x)}}{\abs{u(x)}}\leq \frac{\abs{S_\gamma u(y)}}{\abs{u(y)}} \quad \text{if and only if}\quad \phi(\abs{u(x)}\abs{\delta_{x,y}^s\eta})\leq \phi(\abs{u(y)}\abs{\delta_{x,y}^s\eta}),
	\end{align*}
	and thus
	\begin{align}\label{eq:nonlocal_aux3}
		\widetilde{\mathrm{II}} &\lesssim \int_{B_R}\int_{B_R} \frac{\abs{S_\gamma u(x)}}{\abs{u(x)}} \phi\left(\abs{u(x)}\abs{\delta_{x,y}^s \eta}\right)\dxyn .
	\end{align}
	Using the fundamental theorem of calculus we know
	\begin{align*}
		\abs{\delta_{x,y}^s\eta} \leq \abs{x-y}^{1-s}\norm{\nabla \eta}_{L^\infty} \lesssim \frac{\abs{x-y}^{1-s}}{R-r}\eqsim \left(\frac{\abs{x-y}}{2R}\right)^{1-s}\frac{R^{1-s}}{R-r}.
	\end{align*}
	Inserting this into \eqref{eq:nonlocal_aux3} and using convexity of $\phi$, we get
	\begin{align*}
		\widetilde{\mathrm{II}}&\lesssim \int_{B_R}\int_{B_R} \frac{\abs{S_\gamma u(x)}}{\abs{u(x)}} \left(\frac{\abs{x-y}}{2R}\right)^{1-s} \phi\left( \frac{R^{1-s}}{R-r}\abs{u(x)}\right)\dxyn
		\\
		&\lesssim R^{-(1-s)} \int_{B_R}\frac{\abs{S_\gamma u(x)}}{\abs{u(x)}}\, \phi\left( \frac{R^{1-s}}{R-r}\abs{u(x)}\right)  \int_{B_R} \frac{dy}{\abs{x-y}^{n+s-1}}\,dx
		\\
		&\lesssim \int_{B_R}\frac{\abs{S_\gamma u(x)}}{\abs{u(x)}} \,\phi\left(\frac{R^{1-s}}{R-r}\abs{u(x)}\right)\,dx.
	\end{align*}
	As in the local case (Theorem \ref{thm:orlicz_L_infty}) we now proceed by applying \eqref{eq:orlicz_aux2}. This gives
	\begin{align*}
		\widetilde{\mathrm{II}}&\lesssim \frac{R^{1-s}}{R-r}\int_{B_R} \abs{S_\gamma u}  \,\phi'\left(\frac{R^{1-s}}{R-r}\abs{u}\right) \,dx
		\\
		&=\frac{R^{1-s}}{R-r}\int_{B_R} \abs{S_\gamma u}  \,\phi'\left(\frac{R^{1-s}}{R-r}\abs{u}\indicator_{\set{\abs{u}>\gamma}}\right) \,dx
		\\
		&\leq \frac{R^{1-s}}{R-r}\int_{B_R} \abs{S_\gamma u}  \,\phi'\left(\frac{R^{1-s}}{R-r}\frac{2\Lambda}{\Lambda -\lambda}\abs{S_\lambda u}\right) \,dx
		\\
		&\lesssim \frac{\Lambda-\lambda}{\Lambda}\int_{B_R}  \phi\left(\frac{R^{1-s}}{R-r}\frac{\Lambda}{\Lambda -\lambda}\abs{S_\lambda u}\right) \,dx .
	\end{align*}
	Combining this with \eqref{eq:nonlocal_aux3}, we arrive at
	\begin{align}\label{eq:nonlocal_aux4}
		\abs{\mathrm{II}}\leq \tfrac 12 \mathrm{I} + C\, \frac{\Lambda-\lambda}{\Lambda}\int_{B_R}  \phi\left(\frac{R^{1-s}}{R-r}\frac{\Lambda}{\Lambda -\lambda}\abs{S_\lambda u}\right) \,dx .
	\end{align}

	Finally, we turn to $\mathrm{III}$. Since $\psi =0$ on $(B_{\frac 13 r + \frac 23R})^c$, we have
	\begin{align*}
		\mathrm{III}&= -2 \int_{(B_R)^c} \int_{B_{\frac{r+2R}{3}}} A(\delta_{x,y}^s u)\cdot\frac{\psi(x)}{\abs{x-y}^s}\dxyn
		\\
		&=-2 \int_{(B_R)^c} \int_{B_{\frac{r+2R}{3}}} A(\delta_{x,y}^s u)\cdot\frac{u(x)}{\abs{x-y}^s}\eta^q(x)\frac{\abs{S_\gamma u(x)}}{\abs{u(x)}}\dxyn.
	\end{align*}
	Note that by \eqref{eq:A(P)-A(Q)}
	\begin{align*}
		A(\delta_{x,y}^s u)\cdot \frac{u(x)}{\abs{x-y}^s}&= \left(A(\delta_{x,y}^s u)-A\left(\frac{-u(y)}{\abs{x-y}^s}\right)\right)\cdot\left(\delta_{x,y}^s u
		-\frac{-u(y)}{\abs{x-y}^s}\right)
		\\
		&\quad+A\left(\frac{-u(y)}{\abs{x-y}^s}\right)\cdot\frac{u(x)}{\abs{x-y}^s}
		\\
		&\geq A\left(\frac{-u(y)}{\abs{x-y}^s}\right)\cdot\frac{u(x)}{\abs{x-y}^s}.
	\end{align*}
	Thus,
	\begin{align*}
		\mathrm{III} &\leq 2\int_{(B_R)^c} \int_{B_{\frac{r+2R}{3}}} A\left(-\frac{u(y)}{\abs{x-y}^s}\right)\cdot \frac{-u(x)}{\abs{x-y}^s} \eta^q (x) \frac{\abs{S_\gamma u(x)}}{\abs{u(x)}}\dxyn
		\\
		&\leq 2\int_{B_{\frac{r+2R}{3}}}\int_{(B_R)^c} \phi'\left(\frac{\abs{u(y)}}{\abs{x-y}^s}\right)\frac{dy}{\abs{x-y}^{n+s}}\abs{S_\gamma u(x)}\,dx .
	\end{align*}
	Since $x\in B_{\frac{r+2R}{3}}$ and $y\notin B_R$, we have
	\begin{align*}
		\frac{R-r}{3R}\abs{y-x_B}\leq \abs{y-x} \leq 2 \abs{y-x_B},
	\end{align*}
	where $x_B$ denotes the center of $B_r$ and $B_R$. We thus get
	\begin{align}\label{eq:nonlocal_aux5}
		\begin{aligned}
			\mathrm{III}&\lesssim \left(\frac{3R}{R-r}\right)^{n+s}\!\!\int_{(B_R)^c}\!\! \phi'\left( \left(\frac{3R}{R-r}\right)^{s}\frac{\abs{u(y)}}{\abs{y-x_B}^s}\right)\frac{dy}{\abs{y-x_B}^{n+s}}\int_{B_R}\abs{S_\gamma u} \,dx
		\\
		&\lesssim \left(\frac{R}{R-r}\right)^{n+sq} \phi' \left(R^{-s}\tail\left(u,B_R\right)\right) \int_{B_R} \frac{\abs{S_\lambda u}}{R^s}\indicator_{\set{\abs{u}>\gamma}}\,dx.
		\end{aligned}
	\end{align}
	On the set $\set{\abs{u}>\gamma}$ we have $\abs{S_\lambda u} > \gamma - \lambda =\frac{\Lambda-\lambda }{2}$. This gives us
	\begin{align*}
		\mathrm{III}&\leq\left(\frac{R}{R-r}\right)^{n+sq} \frac{\phi' \left(R^{-s}\tail\left(u,B_R\right)\right)}{\phi'(R^{-s}\frac 12(\Lambda-\lambda))}\int_{B_R} \frac{\abs{S_\lambda u}}{R^s}\phi'\left(\frac{\abs{S_\lambda u}}{R^s}\right)\,dx
		\\
		&\lesssim \left(\frac{R}{R-r}\right)^{n+sq} \frac{\phi' \left(R^{-s}\tail\left(u,B_R\right)\right)}{\phi'(R^{-s}(\Lambda -\lambda))}\int_{B_R} \phi\left(\frac{\abs{S_\lambda u}}{R^s}\right)\,dx.
	\end{align*}
	Combining this with our estimates from \eqref{eq:nonlocal_aux1} and \eqref{eq:nonlocal_aux4}, we have shown
	\begin{align*}
		\begin{aligned}
			\lefteqn{\frac{\Lambda-\lambda}{\Lambda}\int_{B_r}\int_{B_r}\phi(\abs{\delta_{x,y}^s S_\Lambda u}) \frac{dx\,dy}{\abs{x-y}^n}
			\lesssim \frac{\Lambda-\lambda}{\Lambda} \int_{B_R} \phi \left( \frac{\Lambda}{\Lambda-\lambda}\frac{R}{R-r}\frac{\abs{S_\lambda u}}{R^s}\right)  \,dx}\qquad\qquad&
			\\
			&+\left(\frac{R}{R-r}\right)^{n+sq} \frac{\phi' \left(R^{-s}\tail\left(u,B_R\right)\right)}{\phi'(R^{-s}(\Lambda -\lambda))}\int_{B_R} \phi\left(\frac{\abs{S_\lambda u}}{R^s}\right)\,dx.
		\end{aligned}
	\end{align*}
	This finishes the proof.
\end{proof}

Next we combine Lemma \ref{lem:nonlocal_Cacc} with the nonlocal improved \Poincare estimate Lemma \ref{lem:Sob_Ponc_OrliczNL} and use an iteration argument to show local boundedness.

\begin{proof}[Proof of Theorem \ref{thm:local_boundedness_nonlocal}]
	As in the scalar case, we apply the \Caccioppoli~type estimate Lemma \ref{lem:nonlocal_Cacc} to sequences of decreasing balls and increasing thresholds. For that let $B_k \coloneqq (1+2^{-k})B$ and $\lambda_k \coloneqq (1-2^{-k})\lambda_\infty$, where $\lambda_\infty>0$ will be chosen later. The quantity we want to iterate is
	\begin{align*}
		U_k \coloneqq \fint_{B_k} \phi\left(\frac{\abs{S_{\lambda_k u}}}{r^s}\right) \,dx.
	\end{align*}
	The sequence $(U_k)$ is almost decreasing in the sense that $U_k \leq 2^n U_{k-1}$. Recalling \eqref{eq:lambdak} and \eqref{eq:r_k}, we get from Lemma \ref{lem:nonlocal_Cacc} that
	\begin{align}\label{eq:nonlocal_Uk_upper}
		\begin{aligned}
			\fint_{B_k}\int_{B_k} \phi \left(\abs{\delta_{x,y}^s S_{\lambda_k}u}\right)\dxyn &\lesssim \left(2^{2qk}+ 2^{k(n+sq+1)} \frac{\phi'(r^{-s}\tail (u,B))}{\phi'(r^{-s}2^{-k}\lambda_\infty)}\right) U_{k-1}  
			\\
			&\leq 2^{k(2q+n+1)}\left(\frac{\phi'(r^{-s}\tail (u,B))}{\phi'(r^{-s}\lambda_\infty)}+1\right)U_{k-1}.
		\end{aligned}
	\end{align}
  We have by Hölder's inequality
	\begin{align}\label{eq:nonlocal_Uk_lower_aux}
		U_k&\leq \left(\fint_{B_k}\phi^{\frac{n}{n-\frac s2}}\left(\frac{\abs{S_{\lambda_k}u}}{r^s}\right)\,dx\right)^{\frac{n-\frac s2}{n}}\left(\frac{\abs{\set{S_{\lambda_k}u\neq 0}\cap B_k}}{\abs{B_k}}\right)^{\frac{s}{2n}}.
	\end{align} 
	Moreover, by Lemma \ref{lem:Sob_Ponc_OrliczNL} with $\alpha= \frac s2$ we have
	\begin{align*}
		\Bigg(\fint_{B_k}\phi^{\frac{n}{n-\frac s2}}\bigg(\frac{\abs{S_{\lambda_k}u}}{r^s}\bigg)\,dx\Bigg)^{\frac{n-\frac s2}{n}}&\leq \Bigg[\fint_{B_k}\phi^{\frac{n}{n-\frac s2}}\bigg(\frac{\abs{S_{\lambda_k}u-\mean{S_{\lambda_k}u}_{B_k}}}{r^s}\bigg)\,dx\Bigg]^{\frac{n-\frac s2}{n}} 
		\\
		&\quad+\phi\left(\frac{\abs{\mean{S_{\lambda_k}u}_{B_k}}}{r^s}\right)
		\\
		&\lesssim  \fint_{B_k}\int_{B_k} \phi \left(\abs{\delta_{x,y}^s S_{\lambda_k}u}\right)\dxyn +U_k.
	\end{align*}
	Combining this with \eqref{eq:nonlocal_Uk_upper}, \eqref{eq:nonlocal_Uk_lower_aux} and using $U_k\lesssim U_{k-1}$, we arrive at
	\begin{align}\label{eq:nonlocal_Uk}
		U_k \lesssim 2^{k(2q+n+1)}\left(\frac{\phi'(r^{-s}\tail (u,B))}{\phi'(r^{-s}\lambda_\infty)}+1\right)U_{k-1} \left(\frac{\abs{\set{S_{\lambda_k}u\neq 0}\cap B_k}}{\abs{B_k}}\right)^{\frac{s}{2n}}.
	\end{align}
	If $S_{\lambda_k}u \neq 0$, then $\abs{S_{\lambda_{k-1}}u}\geq 2^{-k}\lambda_\infty$. Thus,
	\begin{align*}
		\frac{\abs{\set{S_{\lambda_k}u\neq 0}\cap B_k}}{\abs{B_k}} 
		&\lesssim \frac{1}{\phi(r^{-s}2^{-k}\lambda_\infty)}\fint_{B_k}\phi (r^{-s}\abs{S_{\lambda_{k-1}}u})\,dx
		\leq \frac{2^{kq}}{\phi(r^{-s}\lambda_\infty)}U_{k-1}.
	\end{align*}
	Thus, \eqref{eq:nonlocal_Uk} implies
	\begin{align*}
		\frac{U_k}{\phi(r^{-s}\lambda_\infty)}\lesssim 2^{k(2q+n+1+\frac{s}{2n})}\left(\frac{\phi'(r^{-s}\tail (u,B))}{\phi'(r^{-s}\lambda_\infty)}+1\right)\left(\frac{U_{k-1}}{\phi(r^{-s}\lambda_\infty)}\right)^{1+\frac{s}{2n}}.
	\end{align*}
	If we choose $\lambda_\infty$ big enough such that $\lambda_\infty\geq \tail (u,B)$, then this simplifies to
	\begin{align*}
		\frac{U_k}{\phi(r^{-s}\lambda_\infty)}\lesssim 2^{k(2q+n +\frac{s}{2n})}\left(\frac{U_{k-1}}{\phi(r^{-s}\lambda_\infty)}\right)^{1+\frac{s}{2n}}.
	\end{align*}
  We now apply Lemma~\ref{lem:iteration} with
  \begin{align*}
    W_k &\coloneqq 		\frac{U_k}{\phi(r^{-s}\lambda_\infty)}, &
    \alpha &= \frac{s}{2n}, &
    b &= 2^{2q+n+1+\frac{s}{2n}}.
  \end{align*}
  This gives us that $W_k \to 0$ and therefore $U_k \to 0$ provided that $W_0 \leq \epsilon= \epsilon(n,s,q)$. This holds, e.g., if we choose $\lambda_\infty$ implicitly via
	\begin{align*}
		\phi \left(r^{-s}\lambda_\infty\right) \coloneqq \epsilon^{-1} U_0+ \phi \left(r^{-s}\tail (u,B)\right).
	\end{align*}
  It follows from $U_k \to 0$ that $S_{\lambda_\infty}u=0$ on~$B$ and therefore $\abs{u} \leq \lambda_\infty$ on~$B$. Thus,
	\begin{align}
    \begin{aligned}
      \sup_B \phi (r^{-s}\abs{u}) &\leq \phi (r^{-s}\lambda_\infty)
      \\
      &\lesssim \epsilon^{-1} U_0 + \phi (r^{-s}\tail(u,B))
      \\
      &\leq \fint_{2B} \phi (r^{-s}\abs{u})\,dx + \phi \big(r^{-s}\tail(u,B)\big).
    \end{aligned}
	\end{align}
	This finishes the proof.
\end{proof}
If is clear from the proof of Lemma~\ref{lem:nonlocal_Cacc} and Theorem \ref{thm:local_boundedness_nonlocal} that we can reduce the assumption $u\in W^{s,\phi}(\RRn,\RRN)$ to  the weaker assumption $u\in W^{s,\phi}(2B ,\RRN)$ and $\tail (u,B) < \infty$, which proves Remark~\ref{rem:moritz-tail-spaces}.

\appendix
\section{Improved \Poincare{} estimate}\label{sec:appendix}

%
%
%

We collect some auxiliary lemmas that are needed throughout the proofs.

\begin{lemma}\label{lem:Sob_Ponc_Orlicz}
	Let $\phi$ be an N-function, $v\in W^{1,\phi}(B)$, where $B\subset\RRn$ is a ball. Then
	\begin{align*}
		\left(\fint_B  \phi ^{\frac{n}{n-1}}(\abs{v-\mean{v}_B})\,dx\right)^{\frac{n-1}{n}}\leq c \fint_{B} \phi(r\abs{\nabla v}) \,dx ,
	\end{align*}
	where $c>0$ depends only on $n$ and the $\Delta_2$-constant of $\phi$.
\end{lemma}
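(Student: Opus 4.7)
The plan is to reduce the Orlicz Sobolev--Poincaré estimate to the classical $W^{1,1}$ Sobolev--Poincaré inequality applied to the composed function $F\coloneqq\phi(|v-\mean{v}_B|)$, together with the standard (unstarred) Orlicz--Poincaré estimate and Young's inequality.

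First I would set $w\coloneqq v-\mean{v}_B$ and $F\coloneqq\phi(|w|)$. Since $\phi$ satisfies $\Delta_2$, both $F$ and $|\nabla F|$ lie in $L^1(B)$, and the chain rule gives $|\nabla F|=\phi'(|w|)\,|\nabla|w||\leq \phi'(|w|)\,|\nabla v|$. Applying the classical $W^{1,1}$ Sobolev--Poincaré inequality on the ball $B$ of radius $r$ to $F$ yields
\begin{align*}
  \left(\fint_B |F-\mean{F}_B|^{\frac{n}{n-1}}\,dx\right)^{\frac{n-1}{n}} \lesssim r\fint_B |\nabla F|\,dx \leq r\fint_B \phi'(|w|)\,|\nabla v|\,dx.
\end{align*}
Combined with the triangle inequality in $L^{n/(n-1)}$, this gives
\begin{align*}
  \left(\fint_B F^{\frac{n}{n-1}}\,dx\right)^{\frac{n-1}{n}}
  \leq \left(\fint_B |F-\mean{F}_B|^{\frac{n}{n-1}}\,dx\right)^{\frac{n-1}{n}} + \mean{F}_B.
\end{align*}

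Next I would bound the two terms on the right separately. For the first, Young's inequality in the form~\eqref{eq:young_with_phi2} applied with $s=\phi'(|w|)$ and $t=r|\nabla v|$, together with $\phi^\ast(\phi'(t))\eqsim\phi(t)$ from~\eqref{eq:orlicz1}, gives for any $\epsilon>0$
\begin{align*}
  r\phi'(|w|)\,|\nabla v| \leq \epsilon\,\phi(|w|) + C_\epsilon\,\phi(r|\nabla v|).
\end{align*}
For the second term, $\mean{F}_B=\fint_B \phi(|w|)\,dx$ is controlled by the standard Orlicz--Poincaré inequality
\begin{align*}
  \fint_B \phi(|v-\mean{v}_B|)\,dx \lesssim \fint_B \phi(r|\nabla v|)\,dx,
\end{align*}
which can be derived from the pointwise representation $|w(x)|\lesssim r\,M(|\nabla v|\chi_B)(x)$ via the maximal operator together with the strong $L^\phi$-boundedness of $M$ under $\Delta_2$.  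Putting the two bounds together and choosing $\epsilon$ small to absorb the term $\epsilon\fint_B\phi(|w|)$ (which we also bound by $\fint_B\phi(r|\nabla v|)$ via Orlicz--Poincaré), one obtains exactly the claimed inequality.

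The main obstacle is the unstarred Orlicz--Poincaré estimate used for $\mean{F}_B$: it is the only step that is not a purely algebraic manipulation. Invoking $L^\phi$-boundedness of the Hardy--Littlewood maximal function (which requires $\phi\in\Delta_2$, i.e.\ finite upper Simonenko index $q$) via the pointwise Riesz potential bound handles this cleanly; the remaining constants track only $n$ and the $\Delta_2$-constant of $\phi$, as required.
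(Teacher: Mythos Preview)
Your argument is essentially the paper's own proof: set $F=\phi(|v-\mean{v}_B|)$, apply the $W^{1,1}$ Sobolev--Poincar\'e inequality to $F$, use the chain rule bound $|\nabla F|\leq \phi'(|v-\mean{v}_B|)\,|\nabla v|$, split via Young's inequality, and close with the unstarred Orlicz--Poincar\'e estimate. The only point worth flagging is your justification of that last step: strong $L^\phi$-boundedness of the Hardy--Littlewood maximal operator requires $\phi^*\in\Delta_2$ (equivalently, lower index $p>1$), not merely $\phi\in\Delta_2$, so invoking $M$ would make the constant depend on more than what the lemma states; the paper simply cites the Orlicz--Poincar\'e inequality, which can be obtained from the Riesz potential representation together with Jensen's inequality using only $\Delta_2(\phi)$.
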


\begin{proof}
	Set $w\coloneqq \phi(\abs{v-\mean{v}_B})$. Then
	\begin{align*}
		\abs{\nabla w} = \phi' (\abs{v-\mean{v}_B})\abs{\nabla \abs{v}}\leq  \phi' (\abs{v-\mean{v}_B})\abs{\nabla v} .
	\end{align*}
	Combining this with Sobolev's inequality and Young's inequality gives
	\begin{align*}
		\left(\fint_B  \abs{w} ^{\frac{n}{n-1}}\,dx\right)^{\frac{n-1}{n}}&\lesssim \left(\fint_B  (\abs{w-\mean{w}_B}) ^{\frac{n}{n-1}}\,dx\right)^{\frac{n-1}{n}} + \abs{\mean{w}_B}
		\\
		&\lesssim \fint_{B}  r\abs{\nabla w} + \abs{w}\,dx 
		\\
		&\lesssim \fint_B \phi(r\abs{\nabla v}) \,dx + \fint_B \phi(\abs{v-\mean{v}_B})\,dx. 
	\end{align*}
	We use \Poincare 's inequality to bound the last integral. This finishes the proof.
\end{proof}


The following nonlocal improved \Poincare{} is similar to~\cite[Lemma~4.1]{ByunKimOk2023} but with an improved range of~$\alpha$ and a simplified proof. Note that optimal global Sobolev embeddings for~$\mathcal{J}^s_\phi$ can be found in~\cite[Theorem~6.1]{AlbericoCianciPickSlavikova21}.
\begin{lemma}\label{lem:Sob_Ponc_OrliczNL}
	Let $\phi$ be an N-function, $B\subset \RRn$ be a ball and $v\in W^{s,\phi}(B)$. Then for every $\alpha\in [0,s)$ we have
	\begin{align*}
		\left(\fint_B \left(\phi(\abs{v-\mean{v}_B})\right)^{\frac{n}{n-\alpha}}\,dx \right)^{\frac{n-\alpha}{n}} \leq c \fint_{B}\int_B \phi(r^s\abs{\delta_{x,y}^sv}) \dxyn ,
	\end{align*}
	where $c>0$ depends on $n$, the $\Delta_2$-constant of $\phi$, $s$ and $\alpha$.
\end{lemma}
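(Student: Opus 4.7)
My plan is to adapt the argument of Lemma~\ref{lem:Sob_Ponc_Orlicz} to the fractional setting by setting $w \coloneqq \phi(\abs{v-\mean{v}_B})$. I will first establish the case $\alpha = 0$ as a nonlocal Orlicz--Poincar\'e inequality: combining $\abs{v(x)-\mean{v}_B} \leq \fint_B \abs{v(x)-v(y)}\,dy$ with Jensen's inequality, the pointwise bound $\abs{v(x)-v(y)} \leq 2^s r^s \abs{\delta^s_{x,y}v}$, and the $\Delta_2$-property yields
\begin{align*}
\fint_B \phi(\abs{v-\mean{v}_B})\,dx \lesssim \fint_B \int_B \phi\big(r^s\abs{\delta^s_{x,y}v}\big)\dxyn. \qquad (\star)
\end{align*}

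For $\alpha \in (0,s)$, I apply the classical fractional $W^{\alpha,1}(B) \hookrightarrow L^{n/(n-\alpha)}(B)$ Sobolev--Poincar\'e embedding to $w \geq 0$, which together with a triangle inequality gives
\begin{align*}
\Big(\fint_B w^{\frac{n}{n-\alpha}}\Big)^{\!\frac{n-\alpha}{n}} \lesssim r^\alpha \fint_B \int_B \frac{\abs{w(x)-w(y)}}{\abs{x-y}^{n+\alpha}}\,dx\,dy + \fint_B w\,dx;
\end{align*}
the second term is already controlled by $(\star)$. The convexity of $\phi$ combined with $\Delta_2$ yields the chain-rule estimate $\abs{w(x)-w(y)} \lesssim \big(\phi'(\abs{v(x)-\mean{v}_B}) + \phi'(\abs{v(y)-\mean{v}_B})\big)\abs{v(x)-v(y)}$, and by symmetry in $(x,y)$ the remaining main term reduces to bounding
\begin{align*}
\mathrm{I} \coloneqq r^\alpha \fint_B\int_B \phi'(\abs{v(x)-\mean{v}_B})\, \frac{\abs{v(x)-v(y)}}{\abs{x-y}^{n+\alpha}}\,dx\,dy.
\end{align*}

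The key step and main obstacle is the bound on $\mathrm{I}$: a naive application of Young's inequality produces a $\phi(\abs{v(x)-\mean{v}_B})$ term integrated against the singular kernel $\dxyn$, which is divergent because $\int_B \abs{x-y}^{-n}\,dy = \infty$, so one has to be more careful. The resolution exploits the slack $\beta \coloneqq s-\alpha > 0$ through the identity
\begin{align*}
r^\alpha \frac{\abs{v(x)-v(y)}}{\abs{x-y}^{n+\alpha}}\,dx\,dy = r^{-\beta}\cdot\big(r^s\abs{\delta^s_{x,y}v}\big)\cdot\abs{x-y}^\beta\dxyn,
\end{align*}
which crucially retains the weight $\abs{x-y}^\beta$. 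Applying Young's inequality~\eqref{eq:young_with_phi2} in the form $\phi'(a)b\lesssim \phi(a)+\phi(b)$ then splits $\mathrm{I}$ into two pieces. In the first, $\phi(\abs{v(x)-\mean{v}_B})$ is integrated in $y$ against the weight $\abs{x-y}^{\beta-n}$; since $\beta > 0$, $\int_B \abs{x-y}^{\beta-n}\,dy \lesssim r^\beta$, which cancels the prefactor $r^{-\beta}$ and reduces this piece to $\fint_B \phi(\abs{v-\mean{v}_B})\,dx$, controlled by $(\star)$. In the second, $\phi(r^s\abs{\delta^s_{x,y}v})$ carries the weight $\abs{x-y}^\beta \leq (2r)^\beta$, giving the desired bound $\lesssim \fint_B\int_B \phi(r^s\abs{\delta^s_{x,y}v})\dxyn$ directly. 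Combining all estimates completes the proof.
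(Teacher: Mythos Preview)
Your proposal is correct and follows essentially the same route as the paper: set $w=\phi(\abs{v-\mean{v}_B})$, apply the fractional $W^{\alpha,1}\hookrightarrow L^{n/(n-\alpha)}$ Sobolev--Poincar\'e inequality to $w$, control $\mean{w}_B$ by the nonlocal Poincar\'e inequality~$(\star)$, and handle the main term via the chain-rule bound on $\delta_{x,y}w$ together with Young's inequality~\eqref{eq:young_with_phi2}, where the slack exponent $\beta=s-\alpha>0$ produces the integrable weight $\abs{x-y}^{\beta-n}$ that renders the $\phi(\abs{v-\mean{v}_B})$ piece finite. The only cosmetic differences are that the paper writes the chain-rule estimate in the symmetric form $\delta_{x,y}w\leq (\sigma_{x,y}\phi'(\abs{v-\mean{v}_B}))\abs{\delta_{x,y}v}$ before desymmetrizing, and cites the fractional Poincar\'e inequality directly rather than deriving~$(\star)$ via Jensen.
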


\begin{proof}
	Set $w\coloneqq \phi (\abs{v-\mean{v}_B})$. Then
	\begin{align}\label{eq:appendix1}
		\delta_{x,y} w \leq \big(\sigma_{x,y} \phi'(\abs{v-\mean{v}_B})\big) \abs{\delta_{x,y} v}.
	\end{align}
	Thus, by the fractional Sobolev embedding $W^{\alpha,1}(B)\embedding  L^{\frac{n}{n-\alpha}}$ we have
	\begin{align}\label{eq:appendix2}
		\begin{aligned}
			\left(\fint_B w^{\frac{n}{n-\alpha}}\,dx \right)^{\frac{n-\alpha}{n}}&\lesssim \left(\fint_B \abs{w-\mean{w}_B}^{\frac{n}{n-\alpha}}\,dx \right)^{\frac{n-\alpha}{n}}+ \mean{w}_B
		\\
		&\lesssim r^{\alpha}\fint_B\int_B \abs{\delta_{x,y}^{\alpha} w} \dxyn  + \mean{w}_B .
		\end{aligned}
	\end{align}
	Using the fractional \Poincare inequality we have
	\begin{align*}
		\mean{w}_B \leq \fint_B \phi(\abs{v-\mean{v}_B})\,dx \lesssim \fint_B\int_B \phi(r^s \abs{\delta_{x,y}^s v}) \dxyn .
	\end{align*}
	The estimate \eqref{eq:appendix1} together with Young's inequality \eqref{eq:young_with_phi2} gives us
	\begin{align*}
		r^{\alpha}\fint_B\int_B \abs{\delta_{x,y}^{\alpha} w} \dxyn &\leq r^{\alpha}\fint_B\int_B \phi'(\abs{v(x)-\mean{v}_B})\abs{\delta_{x,y}^{\alpha} v} \dxyn
		\\
		&\lesssim \fint_B \int_B \phi(\abs{\delta_{x,y}^{\alpha}v}\abs{x-y}^{\alpha-s}r^s) \frac{\abs{x-y}^{s-\alpha}}{r^{s-\alpha}}\dxyn \\
		&\quad+ \fint_B \int_B \phi(\abs{v(x)-\mean{v}_B}) \frac{\abs{x-y}^{s-\alpha}}{r^{s-\alpha}}\dxyn
		\\
		&\lesssim \fint_B\int_B \phi(r^s\abs{\delta_{x,y}^{s}v}) \dxyn +\fint_B \phi(\abs{v-\mean{v}_B}) \,dx .
	\end{align*} 
	where we used $\abs{x-y}\leq 2r$ in the last step. Combining this with the fractional \Poincare inequality and \eqref{eq:appendix2}, yields the claimed estimate.
\end{proof}

\printbibliography

\end{document}